\renewenvironment{thebibliography}[1]{%
    \begin{oldthebibliography}{#1}%
    \setlength{\parskip}{0ex}%
    \setlength{\itemsep}{0ex}%
}%
                 {%
  \end{oldthebibliography}%
                 }
\def\thefigure{\thesection.\@arabic\c@figure}
\def\fps@figure{h, t}
\def\thetable{\thesection.\@arabic\c@table}
\def\fps@table{h, t}
\newtheorem{theorem}{Theorem}[section]
\newtheorem{proposition}[theorem]{Proposition}
\theoremstyle{definition}
\newtheorem{remark}[theorem]{Remark}
\newcommand{\pp}[2]{\frac{\partial #1}{\partial #2}} 
\DeclareMathOperator{\diff}{d}
\begin{document}
\title{Higher order phase averaging for highly oscillatory systems}

\author{Werner Bauer, Colin J. Cotter, Beth Wingate}

\maketitle

\newcommand{\V}{\mathbf{V}}
\newcommand{\U}{\mathbf{U}}
\newcommand{\W}{\mathbf{W}}
\newcommand{\M}{\mathrm{M}}
\newcommand{\R}{\mathrm{R}}
\newcommand{\La}{\mathrm{L}}
\newcommand{\Fu}{\mathbf{F}}

\begin{abstract}

  We introduce a higher order phase averaging method for nonlinear
  oscillatory systems. Phase averaging is a technique to filter fast
  motions from the dynamics whilst still accounting for their effect
  on the slow dynamics. Phase averaging is useful for deriving reduced
  models that can be solved numerically with more efficiency, since
  larger timesteps can be taken. Recently, Haut and Wingate (2014)
  introduced the idea of computing finite window numerical phase
  averages in parallel as the basis for a coarse propagator for a
  parallel-in-time algorithm. In this contribution, we provide a
  framework for higher order phase averages that aims to better
  approximate the unaveraged system whilst still filtering fast
  motions. Whilst the basic phase average assumes that the solution is
  independent of changes of phase, the higher order method expands the
  phase dependency in a basis which the equations are projected onto.
  In this new framework, the original numerical phase averaging
    formulation arises as the lowest order version of this expansion in
    which the nonlinearity is projected onto the space of functions that
    are independent of the phase. Our new projection onto functions that
    are $k$th degree polynomials in the phase gives rise to higher
    order corrections to the phase averaging formulation. 
  We illustrate the properties of this method on an ODE that describes
  the dynamics of a swinging spring due to Lynch (2002). Although
  idealized, this model shows an interesting analogy to geophysical
  flows as it exhibits a slow dynamics that arises through the
  resonance between fast oscillations. On this example, we show
  convergence to the non-averaged (exact) solution with increasing
  approximation order also for finite averaging windows. At zeroth
  order, our method coincides with a standard phase average, but at
  higher order it is more accurate in the sense that solutions of the
  phase averaged model track the solutions of the unaveraged equations
  more accurately.\\
  
  \noindent\textbf{Keywords:} Phase averaging, slow-fast systems, resonant interactions

  \noindent\textbf{MSC\,2020:} 37M99
 
\end{abstract}

\section{Introduction}
\label{sec:intro}
          
          One of the main challenges in simulating highly oscillatory multiscale PDEs is that the fast waves are known to create the lowest frequencies of the solution through nonlinear phase shifts, but are also responsible for severe linear wave time-step restrictions.
          
Phase averaging is a technique for approximating highly oscillatory
systems so that the solution still captures the long time trends in
the dynamics \citet{SandersVerhulst}.
In the context of highly oscillatory nonlinear PDEs,
the combination of coordinate transformation, phase averaging, and asymptotics has been used to
derive slow PDEs that do this \citep[for
  example]{schochet1994fast,majda1998averaging,Babin1997,wingate2011low},
providing rigorous derivations of slow geophysical models such as the
quasigeostrophic model. Slow models are more analytically tractable
and easier to explain, and they are more efficient to solve
numerically. For an example numerical method inspired from the study of fast singular limits see \citep{Jones1999}.  The reason they are more tractable to solve is because they have had the fast frequencies
filtered from them, and therefore much larger timesteps can be taken without
excessively compromising stability or accuracy. For example, the first
numerical weather prediction exercise by Charney, Fj\/ortoft and von
Neumann \citep{ChFjNe1950} was based upon the barotropic vorticity equation, a model that
does not support the full gravity and pressure waves present in more
detailed models. One problem with asymptotic models is that they are only accurate at or near the limit, whereas the physically realistic parameters used in applications may not be near these limits, or the parameter regime itself may shift as the simulation unfolds. Ongoing research in theoretical fluid dynamics highlights an issue with using asymptotic equations that remove the fast oscillations: waves are responsible for transfering the energy toward the low frequencies, see \citep{SmithWaleffe2002}.  This is one reason asymptotic models are not used in contemporary weather and climate modeling as a predictive tool. In fact, the role of the oscillations has been found to be key to allowing accurate simulations and in some operational weather forcasting models, the compressible Euler equations, which include sound waves, are used; for many years sound waves were thought to be unimportant \citep{Davies_etal_03}.

This paper is not about numerical algorithms, but it is highly
motivated by them. \citet{haut2014asymptotic} investigated the
possibility of using a framework inspired by the study of fast
singular limits in that they use a coordinate transformation and phase
averaging, but without asymptotics. Their goal was to use
  phase averaging to reduce high oscillations in the transformed
  solution, so that larger timesteps can be taken without triggering
  instability or producing large errors.  They performed phase
averaging over a finite window, and the phase average integral with
numerical quadrature. Although this results in a computationally more
intensive model, the terms in the quadrature sum can be evaluated
independently, exposing possibilities for parallel computing. The
phase averaged formulation also involves the application of matrix
exponentials, which can be computed by Krylov methods or other
techniques such as the parallel rational approximation approach of
\citet{haut2016high,CALIARI2021110289,schreiber2018beyond},
particularly suited to oscillatory operators. This produces a variant
of heterogeneous multiscale methods
\citep{abdulle2012heterogeneous,Kevrekidis2003,tao2010}. The accuracy
and stability of the finite window phase averaging technique was
studied in \citet{peddle2019parareal}, and applied to proving
convergence of the parareal iteration.  If the results of the
averaging method are insufficiently accurate, then it can be used as
the predictor in a predictor-corrector framework, see \citep[for
  example]{Jones1999}.  Many of these frameworks provide additional
opportunities for parallel computing. With this in mind,
\citet{haut2014asymptotic} introduced an asymptotic parareal method
using the finite window averaging as a coarse propagator, and using a
standard time integration method applied to the original unaveraged
system as a fine propagator. This approach was extended to
  ODEs with nonlinear fast oscillations in \citet{ariel2016}.

In this paper we lay the foundations for an alternative approach to
correcting averaging errors, where the accuracy of the finite window
averaged model is increased ``from the top down'' by extending the
phase average into a polynomial expansion of the solution in the phase
variable. The goal is to further reduce the error in the slow
component of the solution whilst still filtering fast
oscillations. This increases the model complexity but it may be
interesting to consider how to hide higher order corrections to the
solution behind other computations in parallel, as occurs in the
PFASST framework \citep{minion2011hybrid}, for example.  Here, we
introduce and explore the higher order averaging technique applied to
ODEs, although we have highly oscillatory PDEs and fast singular
limits in mind when we do this. To isolate and understand the effects
of our higher order averaging technique, in this paper we compute
phase average integrals exactly (even though practical approaches
would use numerical quadrature), and we use an adaptive timestepper to
get a very accurate estimate of the ODE solution trajectory. We find
that when applied to a swinging spring model introduced by Lynch
\citep{holm2002stepwise}, the higher order averaging does indeed
produce a more accurate solution than the basic finite window
averaging, and that this error decreases as we increase the polynomial
degree of the expansion in the phase variable.

The rest of the paper is organised as follows. In Section
\ref{sec:expint} we introduce the higher order averaging technique in
the most general form, before specialising to a polynomial basis for
the expansion in the phase variable as well as selecting a Gaussian
averaging weight function so that the phase averaging can be computed
analytically. This is not critical to the framework but was done to
expediate our numerical experiments and to expose understanding of the
effect of the higher order averaging.  We then discuss asymptotic
limits for the averaging window. In Section \ref{sec:swingspring} we
develop further formulae for Lynch's swinging spring model, a specific
low dimensional example.  In Section \ref{sec:num} we present
numerical integrations for the swinging spring example, calculating
errors between the higher order average models and the exact model.
Finally, in Section \ref{sec:summary} we provide a summary and
outlook.

\section{Higher order averaging: formulation}
\label{sec:expint}

We consider initial value problems written in the following abstract
form
\begin{equation}\label{equ_gen_U}
 \pp{}{t} \U = L\U + N(\U ,\U ),
\end{equation}
where $L$ is a linear operator (with 
purely imaginary eigenvalues) and 
$N$ is a nonlinear operator. 
Restricting ourselves to quadratic nonlinearities, we assume
$N$ to be a bilinear form (bilinearity indicated by the two slots).
To develop phase averaged models, it is useful
to reformulate \eqref{equ_gen_U}
by introducing the factorization
\begin{equation}\label{factoring}
 \V(t) = e^{-t L} \U(t),
\end{equation}
where
the \emph{modulated variable} $\V(t)$ satisfies the equation
\begin{equation}\label{equ_gen_V}
\pp{}{t} \V(t) = e^{-t L} N \left(e^{ t L} \V(t),e^{ t L} \V(t)\right).  
\end{equation}
We refer to this form as the \emph{modulated equation}.  It is
straightforward to show that both formulations are equivalent:
when multiplying both sides of \eqref{equ_gen_U} with the integration
factor $e^{-t L} $ and noting that $e^{-t L} (\partial_t \U - L \U) =
\partial_t (e^{-t L} \U )$, we recover Eqn.~\eqref{equ_gen_V} via
 \begin{align}
 e^{-t L} (\partial_t \U - t L \U) =  e^{-t  L} N (\U,\U) 
 &\Leftrightarrow \partial_t (e^{-t L} \U ) =  e^{-t L} N (\U,\U) \\
 &\Leftrightarrow \partial_t \V  =  e^{-t L} N (e^{t L} \V,e^{t L}\V) .
 \end{align}
 Note that this equation is valid for cases with 
 or without scale separation (as discussed below).

To apply phase averaging, we add an additional phase parameter $s \in
\mathbbm{R}$ over which we integrate and hence average the solution.
As a first step, we assume a factorization
 \begin{equation}\label{factoring_s}
 \V(t,s) = e^{-(t+s) L} \U(t)
\end{equation}
where, besides the time parameter $t$, we include a \emph{phase
  parameter} $s\in \mathbbm{R}$ in the exponent such that the solution
in $\V$ depends on both $t$ and $s$. The phase parameter $s$
corresponds to a change in the point where the exponential
$\exp((t+s)L)$ is equal to the identity, i.e. a phase shift.

As a calculation similar to above shows, this factorization
yields a form of \eqref{equ_gen_V} that includes the parameter $s$,
i.e.
\begin{equation}\label{equ_gen_V_s}
   \pp{}{t} \V(t,s) = e^{-(t+s) L} N \left(e^{ (t+s) L} \V(t,s),e^{ (t+s) L} \V(t,s)\right).
\end{equation}
 This gives us a one parameter family of solutions $\V(t,s)$ that
 depend on the phase parameter $s$ with the property that for $s = 0$
 we recover the solution of \eqref{equ_gen_V}. We can then obtain
 a phase averaged model,
  \begin{equation}
   \pp{}{t} \bar{\V}(t) = \lim_{T\to \infty}
   \frac{1}{2T}\int_{-T}^{T} e^{-(t+s) L} N \left(e^{ (t+s) L} \bar{\V}(t,s),e^{ (t+s) L} \bar{\V}(t,s)\right) \diff s.
\end{equation}
 Phase averaging for \eqref{equ_gen_V_s} was analysed by
 \citet{schochet1994fast} in the limit $\epsilon\to 0$, leading to
 slow equations that approximate the solution of the unaveraged model
 in this limit in some appropriate sense. In particular,
 \citet{majda1998averaging} showed that this averaging and
 $\epsilon\to 0$ limit recovers the quasigeostrophic equation in the
 low Rossby number limit of the rotating shallow water equations, when
 written in this form. For finite Rossby number,
 \citet{haut2014asymptotic} proposed to adapt this approximation to
 numerical computations by keeping $T$ finite and introducing a
 $C^\infty$ weight function $\rho$ with $\rho\geq 0$,
 $\|\rho\|_{L^1}=1$, to obtain the averaged equation
 \begin{equation}
   \label{equ_gen_V_int}
   \pp{}{t} \bar{\V}(t) = \int_{-\infty}^{\infty}
   \rho_T\left(s\right)e^{-(t+s) L} N \left(e^{ (t+s) L}
   \bar{\V}(t,s),e^{ (t+s) L} \bar{\V}(t,s)\right) \diff s,
 \end{equation}
 where $\rho_T(s)=\rho(s/T)/T$. They further proposed to replace the
 integral via quadrature (choosing $\rho$ to have compact support),
 with the evaluation of the integrand at each quadrature point being
 computed independently in parallel; this formulation provided the
 coarse propagator for a parallel-in-time algorithm. The
   reason for the phase averaging in that paper, and this one, is to
   remove high oscillations from the modulated variable, allowing
   larger timesteps to be taken in a numerical solver. This averaging
   introduces an error in the solution, and the goal of this paper is
   to introduce refinements to the phase averaging procedure so that
   this error can be reduced.

Note that depending on the value of $T$,
formulation~\eqref{equ_gen_V_int} recovers an averaged version, an
asymptotic approximation, or the full equations~\eqref{equ_gen_V}:
\begin{itemize}
\item for $T \rightarrow 0$, we recover Eqn.~\eqref{equ_gen_V}
  because in the limit $T\rightarrow 0$ the weight function
  $\rho$ converges formally to the Dirac measure $\delta(s)$.
\item for 
  $T \rightarrow \infty$, we recover \eqref{equ_gen_V_int},
 because when integrating the resulting integral value is invariant under 
 shifts in $t$; hence for $t=0$ we recover the asymptotic approximation introduced in 
 \citet{majda1998averaging} after also taking $\epsilon \to 0$.
 
\item for $0<< T << \infty$, formulation~\eqref{equ_gen_V_int} 
  has a smoothing effect on fast oscillations with time period less than $T$.
\end{itemize}
The optimum choice of $T$ as a function of $\epsilon$ to minimise the
error due to averaging when combined with a numerical time integration
method to solve \eqref{equ_gen_V_int} was investigated in
\citet{peddle2019parareal}.

An interpretation of \eqref{equ_gen_V_int} is that we have projected
$\V(t,s)$ onto the space of functions independent of $s$ using the
$\rho$-weighted $L^2$ inner product. In the next subsection, our
higher order averaging method for finite averaging window $T$ will be
based on projecting the one parameter family of solutions on to
higher order polynomials in $s$. After solving this system of coupled
differential equations, we set $s = 0$ because we are interested only
in the time evolution of the solution with this phase value. Hence, we
only present solutions at $s=0$ in the numerical results section.

\subsection{General framework of higher order time averaging}

We abbreviate the RHS of \eqref{equ_gen_V} as general bilinear function 
$\Fu(\V(t,s),\V(t,s);t+s)$ that has quadratic dependency on $\V(t,s)$, i.e.
\begin{equation}\label{eqn_general}
  \pp{}{t}  \V(t,s)  = \Fu\big( \V(t,s),\V(t,s);t+s \big) .
\end{equation}
According to \eqref{factoring_s}, the solution vector $\V$ depends on both $t$ and $s$ and 
we make the approximation that it can be represented by the factorization 
\begin{equation}\label{eqn_vel_split}
  \V(t,s) = \sum_{k = 0}^p \V_k(t) \phi_k(s),
\end{equation}
where $\phi_k(s)$ are basis vectors depending on $s$ that span $Q$,
some chosen $(p+1)$-dimensional function space.

We project \eqref{eqn_general} onto $Q$ to obtain
\begin{equation}
 \int_{-\infty}^{\infty} \rho_T\left({s}\right) \W(s) \cdot \pp{}{t}  \V(t,s) ds 
 =  \int_{-\infty}^{\infty} \rho_T\left({s}\right) \W(s) \cdot \Fu \big( \V(t,s),\V(t,s); t+s \big) ds \quad \forall \W(s)\in Q,
 \end{equation}
 where $\rho_T(s)=\rho(s/T)/T$ for our chosen weight
 function $\rho$.

Also expanding the test function $\W$ in the basis,
\begin{equation}
 \W(s)  = \sum_{j = 0}^p  \W_j  \phi_j(s),
\end{equation}
we get
\begin{equation}
  \begin{split}
    \sum_{j=0}^p \sum_{k = 0}^p &
  \W_j\int_{-\infty}^{\infty} \frac{1}{T}\rho\left(\frac{s}{T}\right)   {\phi_j(s) \phi_k (s) ds}\pp{}{t}  \V_k(t)  =  \\
  &\sum_{j=0}^p \W_j \int_{-\infty}^{\infty} \frac{1}{T}\rho\left(\frac{s}{T}\right) \phi_j(s)  \Fu \big( \sum_{k = 0}^p  \V_k(t)  \phi_k(s),\sum_{l = 0}^p  \V_l(t)  \phi_l(s); t+s \big) ds, \quad \forall \,\W_1,\ldots,\W_p.
  \end{split}
 \end{equation}
 As the coefficients $\W_j$ are independent from each other, we obtain
 \begin{equation}
  \begin{split}
    \sum_{k = 0}^p 
   \int_{-\infty}^{\infty} \frac{1}{T}\rho\left(\frac{s}{T}\right)  & {\phi_j(s) \phi_k (s) ds}\pp{}{t}  \V_k(t)  =  \\
  &    \int_{-\infty}^{\infty} \frac{1}{T}\rho\left(\frac{s}{T}\right) \phi_j(s)  \Fu \big( \sum_{k = 0}^p  \V_k(t)  \phi_k(s),\sum_{l = 0}^p  \V_l(t)  \phi_l(s); t+s \big) ds.
  \end{split}
 \end{equation}
 This defines our averaged model for the components $\V_k$, in the
 form of a matrix-vector equation for $\pp{}{t}\V_k$.  To use it, we
 set $\V(t=0,s)=\U (0)$, the initial condition, independently of $s$. 
 Then we solve the equations forward in
 time (perhaps approximating the phase integral as a sum), 
 taking the solutions at $s=0$.

 To understand more about what this approximation does, we further
 specify the nonlinear function $\Fu$.  As it is bilinear, we may
 pull out the double sum over the basis $\phi$, i.e.
 \begin{equation}
 \begin{split}
  \Fu \left( \sum_{k = 0}^p  \V_k(t)  \phi_k(s),\sum_{l = 0}^p  \V_l(t)  \phi_l(s); t+s \right) = 
  & \sum_{k = 0}^p  \sum_{l = 0}^p \phi_k(s) \phi_l(s) \Fu \big(  \V_k(t)  , \V_l(t)  ; t+s \big). 
  \end{split}
  \end{equation}
  Moreover, we assume that we can factor out the time dependency of $\Fu$ on $t+s$ 
  so that $\Fu$ is represented as
 \begin{equation}
 \Fu \big(  \V_k(t)  , \V_l(t)  ; t+s \big) = \sum_{m  } \Fu_m \big(  \V_k(t)  , \V_l(t) \big)e^{i {(t+s)c_m} }
\end{equation}
where the sum in $m$ is over different terms that are quadratic in
$\V$ (for different polynomial order $k,l$) with coefficients $c_m$
and bilinear functions $\Fu_m$ and that depend on the nonlinear operator $N$. 
In the case of a PDE, this sum over $m$ will be replaced by
an infinite sum over frequencies.
Examples of these coefficients and their relations are shown further
below in Table~\ref{table_coeff} for the swinging spring model.  To
ease notation, we will adopt the short hand notation
$\Fu_{m,k,l}(t):= \Fu_m \big( \V_k(t) , \V_l(t) \big)$.

 In summary, we end up with a system of equations (for $j=0,\dots p$)
 that reads
 \begin{equation}\label{eqn_sum}
  \begin{split}
    \sum_{k = 0}^p 
   &\int_{-\infty}^{\infty} \frac{1}{T}\rho\left(\frac{s}{T}\right)   {\phi_j(s) \phi_k (s) ds}\pp{}{t}  \V_k(t)  =  \\
  &      \sum_{m} \sum_{k = 0}^p \sum_{l = 0}^p \Fu_{m,k,l}(t)  
  \int_{-\infty}^{\infty}\frac{1}{T} \rho\left(\frac{s}{T}\right) \phi_j(s)  
    \phi_k(s) \phi_l(s)e^{i {(t+s)c_m} } 
 ds \quad \forall  \phi_j(s).
  \end{split}
 \end{equation}
 This formulation provides a general framework of higher order
 averages for various choices of $Q$ and weight function $\rho$.
 
 \subsection{Polynomial higher order time averaging} 

 In this section we make some specific calculations using the choice
 of $Q=P_p$, the degree-p polynomials, and a particular choice of
 weight function that allows us to make progress analytically, 
 namely $\rho (s) =\exp(-s^2/2)/\sqrt{2\pi}$, recalling that
 \begin{equation}\label{I_norm}
 \int_{-\infty}^{\infty}\frac{1}{T} \rho\left(\frac{s}{T}\right) ds=
 \int_{-\infty}^\infty \frac{1}{\sqrt{2\pi T^2}} e^{-\frac{s^2}{2T^2}} ds = 1.
 \end{equation}
 This allows us to integrate over the whole $\mathbbm R$ while guaranteeing a bounded integral,
 but other choices are possible too.

 As a basis $\phi$, we choose the standard monomials
 \begin{equation}
  \phi_j(s) = s^j, \quad j = 0, \dots, p.
 \end{equation}
 This is a poorly-conditioned basis, and better choices might be made
 for large values of $p$ (for the weight function we are using the
 Hermite polynomials would be ideal since they diagonalise the mass
 matrix). However, the monomial basis simplifies the calculations
 below. Also, under this basis, ${\V}(t,s=0)= {\V}_0(t)$, so to initialise
 the system we can just set ${\V}_0(0)=\V(0,s=0)$ and ${\V}_k(0)=0$ for
 $1\leq k \leq p$.
 
 Substituting this basis into \eqref{eqn_sum}, we arrive at
 \begin{equation}\label{eqn_sum1}
  \begin{split}
    \sum_{k = 0}^p 
   & \frac{1}{\sqrt{2\pi T^2}}\int_{-\infty}^\infty e^{-\frac{s^2}{2T^2}}    {s^j s^k  ds } \pp{}{t}  \V_k(t)  =  \\
  &      \sum_{m} \sum_{k = 0}^p \sum_{l = 0}^p\Fu_{m,k,l}(t)  e^{ic_m t}
  \frac{1}{\sqrt{2\pi T^2}}\int_{-\infty}^\infty e^{-\frac{s^2}{2T^2}}  e^{i {c_m} s}  
  s^j s^k s^l 
 ds \quad \forall  j.
  \end{split}
 \end{equation}
 The latter equation can be formulated more concisely as
 \begin{equation}\label{eqn_sum2}
  \begin{split}
    \sum_{k = 0}^p \M_{jk}  \pp{}{t}  \V_k(t)  =  
    \sum_{m } \sum_{k = 0}^p \sum_{l = 0}^p\Fu_{m,k,l}(t)  e^{i {c_m} t}
    \tilde\R_{jkl}^m \quad \forall  j,
  \end{split}
 \end{equation}
 in which we denote the integral on the left hand side (LHS) of \eqref{eqn_sum1} as a mass matrix 
 \begin{equation}\label{def_massmtx}
  \M_{jk}  = \frac{1}{\sqrt{2\pi T^2}}\int_{-\infty}^\infty e^{-\frac{s^2}{2T^2}}    {s^j s^k  ds },  
 \end{equation}
 and that on the RHS of \eqref{eqn_sum1} as
 \begin{equation}\label{def_RHSmassmtx}
 \tilde \R_{jkl}^m  =  \frac{1}{\sqrt{2\pi T^2}}\int_{-\infty}^\infty e^{-\frac{s^2}{2T^2}} e^{i {c_m} s}   s^j s^k s^l  ds .
 \end{equation}
 To evaluate these integrals, it will be useful to introduce the notation
 \begin{equation}\label{equ_Ialpha}
  I_\alpha := \frac{1}{\sqrt{2\pi T^2}}\int_{-\infty}^\infty e^{-\frac{s^2}{2T^2}}    {s^\alpha  ds }
 \end{equation}
 with $I_0 = 1$ which follows immediately from \eqref{I_norm}. 
 Then, for instance, for $\alpha = i +j$ we have $M_{ij} = I_\alpha$. 
 With the help of this concise notation, we formulate the following two propositions that 
 allow us to evaluate the integrals in \eqref{def_massmtx} and \eqref{def_RHSmassmtx}.

 \begin{proposition}\label{prop_I}
  For averaging window $T$ and $\alpha = j + k$ with $j,k = 0, \dots, p$,
  the integral $I_\alpha$ in \eqref{equ_Ialpha}   can be represented in 
  the recursive form
 \begin{equation}\label{equ_I}
  I_\alpha = T^2 (\alpha-1) I_{\alpha-2} \quad \text{with} \quad I_0 = 1  
 \end{equation}  
 In particular, $I_{\alpha} = 0$ for all $\alpha$ odd.
 \end{proposition}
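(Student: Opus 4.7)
The plan is to prove the recursion by integration by parts, exploiting the elementary identity
\[
  s\, e^{-s^2/(2T^2)} \;=\; -T^2\,\tfrac{d}{ds}\!\left(e^{-s^2/(2T^2)}\right),
\]
which is the key mechanism that links $I_\alpha$ to $I_{\alpha-2}$. The base case $I_0 = 1$ is already given by \eqref{I_norm}, so nothing new is needed there.

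For $\alpha \geq 2$, I would split $s^\alpha = s^{\alpha-1}\cdot s$ in the integrand of \eqref{equ_Ialpha}, substitute the identity above, and integrate by parts. This produces a boundary term
\[
  -T^2 \left[\, s^{\alpha-1} e^{-s^2/(2T^2)} \,\right]_{-\infty}^{\infty}
\]
together with the volume term $T^2(\alpha-1)\int s^{\alpha-2} e^{-s^2/(2T^2)}\,ds$. The boundary term vanishes because the Gaussian decays faster than any polynomial grows. Dividing through by $\sqrt{2\pi T^2}$ yields exactly $I_\alpha = T^2(\alpha-1)\,I_{\alpha-2}$.

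For the parity statement, the quickest route is to observe that if $\alpha$ is odd, then $s^\alpha e^{-s^2/(2T^2)}$ is an odd function of $s$, so integration over the symmetric interval $(-\infty,\infty)$ gives $0$ directly. Alternatively, one can compute $I_1 = 0$ as a special case of the same oddness argument and then iterate the recursion to get $I_{\alpha} = 0$ for every odd $\alpha \geq 1$.

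There is no real obstacle here; the only small technical point is justifying the vanishing of the boundary term, which is immediate from $|s^{\alpha-1}| e^{-s^2/(2T^2)} \to 0$ as $|s|\to\infty$. The proof is therefore a short integration-by-parts argument plus a symmetry observation, both of which can be stated in a few lines.
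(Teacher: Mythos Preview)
Your proposal is correct and follows essentially the same route as the paper: integration by parts via the identity $s\,e^{-s^2/(2T^2)} = -T^2\frac{d}{ds}e^{-s^2/(2T^2)}$, vanishing of the boundary term by Gaussian decay, and the odd-$\alpha$ case handled by the oddness of the integrand over a symmetric interval. The paper's proof is organized slightly differently in notation but is substantively identical.
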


 \begin{proof} This recursive formula follows directly from integration by parts. Using the normalization
 factor $\gamma = \frac{1}{\sqrt{2\pi T^2}}$, there follows
  \begin{equation}\label{eqn_intpartsaa}
  \begin{split}
   I_\alpha = \gamma \int_{-\infty }^{\infty  } 
  \frac{d}{ds} e^{- \frac{s^2}{2T^2} } \frac{-2T^2}{2s} s^\alpha ds  
 & = \gamma \left[- e^{- \frac{s^2}{2T^2} } T^2 s^{\alpha-1} \right]_{-\infty }^{\infty  } 
  + T^2 (\alpha-1) \gamma \int_{-\infty }^{\infty  } e^{- \frac{s^2}{2T^2} } s^{\alpha-2}  ds \\
 &  = T^2 (\alpha-1) I_{\alpha-2}.
  \end{split}
 \end{equation}
 For $\alpha = 0$, we have $I_0 = 1$ since $\int_{-\infty }^{\infty  } e^{- \frac{s^2}{2T^2} } = \sqrt{2\pi T^2}$
 which follows from \eqref{I_norm}. 
 For odd $\alpha$, $I_\alpha$ is an integral of a product of an even and odd function over entire $\mathbbm{R}$, 
 hence zero.
 \end{proof}

 \begin{proposition}
 \label{prop_R}
 Using the recursive formula of Proposition~\ref{prop_I} 
 and for averaging window $T$ and $\alpha = j + k +l$ with $j,k,l = 0, \dots, p$,
  the integral~\eqref{def_RHSmassmtx} can be represented for all $m$ as
 \begin{equation}
   \tilde \R_{\alpha}^m  = e^{- \frac{c_m^2 T^2}{2} }   \R_\alpha^m 
\quad \text{with} \quad 
  \R^m_\alpha =  \sum_{\beta = 0}^\alpha \binom{\alpha}{\beta} 
  I_{\alpha - \beta} \cdot (ic_m T^2)^{\beta} 
 \end{equation}
 with the property that for $\alpha =0$ we have $\R^m_0=1$ for all $m$.
 \end{proposition}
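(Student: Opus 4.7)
The plan is to evaluate $\tilde\R^m_\alpha$ by completing the square in the Gaussian exponent, performing a complex translation of the integration variable, and then expanding via the binomial theorem. The key algebraic identity
\begin{equation*}
  -\frac{s^2}{2T^2} + ic_m s \;=\; -\frac{(s - ic_m T^2)^2}{2T^2} \;-\; \frac{c_m^2 T^2}{2}
\end{equation*}
lets me pull the factor $e^{-c_m^2 T^2/2}$ outside the integral, producing exactly the scalar prefactor appearing in the claimed formula. This already disposes of the $e^{-c_m^2T^2/2}$ and reduces the proposition to computing the shifted Gaussian moment $\R^m_\alpha$.

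Next, I would make the complex change of variables $u = s - ic_m T^2$. Since this is a contour shift rather than a real substitution, I would justify it by applying Cauchy's theorem to the entire function $z\mapsto e^{-z^2/(2T^2)} z^\alpha$ on a rectangular contour whose horizontal sides are the real axis and the line $\mathrm{Im}(z) = -c_m T^2$, and whose vertical sides have real parts $\pm R$; the decay $|e^{-z^2/(2T^2)}| = e^{-(\mathrm{Re}\,z)^2/(2T^2)+(\mathrm{Im}\,z)^2/(2T^2)}$ kills the two vertical contributions as $R\to\infty$, giving $\gamma\int e^{-u^2/(2T^2)} (u+ic_m T^2)^\alpha\,du$.

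Then the binomial theorem gives
\begin{equation*}
  (u + ic_m T^2)^\alpha \;=\; \sum_{\beta=0}^\alpha \binom{\alpha}{\beta}\, u^{\alpha-\beta}\, (ic_m T^2)^\beta,
\end{equation*}
and interchanging the finite sum with the integral identifies each remaining integral as $I_{\alpha-\beta}$ via the definition \eqref{equ_Ialpha}. Summing yields the claimed $\R^m_\alpha = \sum_{\beta=0}^\alpha \binom{\alpha}{\beta} I_{\alpha-\beta}(ic_m T^2)^\beta$, and for $\alpha=0$ the sum collapses to $\binom{0}{0}I_0 \cdot (ic_m T^2)^0 = 1$, matching the stated boundary case.

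The only step that is not pure algebra is the justification of the contour translation, and this is the mild obstacle; it is a standard Cauchy-theorem argument, but care is needed to estimate the two vertical contributions uniformly in $\alpha$. An alternative that sidesteps complex analysis is to differentiate the elementary Gaussian Fourier identity $\gamma\int e^{-s^2/(2T^2)} e^{ic_m s}\,ds = e^{-c_m^2T^2/2}$ in $c_m$ to bring down powers of $is$ and then reassemble the resulting Hermite-type expansion, but the contour shift gives the most direct route to the finite binomial sum asserted in the proposition.
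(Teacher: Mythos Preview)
Your proposal is correct and follows essentially the same route as the paper: complete the square to extract the $e^{-c_m^2 T^2/2}$ factor, shift the contour of integration by $-ic_m T^2$ using analyticity and Gaussian decay, then apply the binomial expansion and recognise each term as an $I_{\alpha-\beta}$. Your treatment of the contour shift via a rectangular contour with explicit decay estimates on the vertical sides is in fact more careful than the paper's somewhat informal appeal to a ``semi circle at infinity.''
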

  
 \begin{proof}
 First, we rewrite \eqref{def_RHSmassmtx} using $\alpha = j+k+l$ and by 
 completing the square 
 ($ -\frac{s^2}{2T^2} + ic_m s =     - \frac{1}{2T^2} (s - ic_m T^2)^2 - \frac{c_m^2 T^2}{2}$) 
 we arrive at
 \begin{equation}\label{eqn_mod_completion}
 \tilde \R_{\alpha}^m  = e^{- \frac{c_m^2 T^2}{2} }   \R_\alpha^m  \quad 
  \text{with} \quad  \R_\alpha^m:= \frac{1}{\sqrt{2\pi T^2}} \int_{-\infty}^\infty e^{- \frac{1}{2T^2} (s - ic_m T^2)^2}  s^\alpha ds .
 \end{equation}
 Next, we use the substitution $s' = s - ic_m T^2$ with $ds' = ds$:
 \begin{equation}
  \R_\alpha^m =   \frac{1}{\sqrt{2\pi T^2}} \int_{-\infty-  ic_m T^2 }^{\infty-  ic_m T^2 } e^{- \frac{s'^2}{2T^2} }  (s' + ic_m T^2)^\alpha ds' .
 \end{equation}
 Because the latter is a contour integral in the complex domain in which 
 one term is integrated along the real axis and another one along a 
 semi circle at infinity, the shift of the real axis in the complex direction by $-  ic_m T^2$ 
 does not change the integral value as long as there are no, or the same number of singularities 
 enclosed by the contour. 
 Therefore, we can equivalently write
 \begin{equation}\label{equ_ralpha}
  \R_\alpha^m =   \frac{1}{\sqrt{2\pi T^2}} \int_{-\infty}^{\infty} e^{- \frac{s'^2}{2T^2}}  (s' + ic_m T^2)^\alpha ds'
 \end{equation}
 while for $\alpha = 0$ there follows
 \begin{equation}\label{eqn_r3a}
  \R_0^m  =  {\frac{1}{\sqrt{2\pi T^2}} \int_{-\infty - ic_m T^2}^{\infty - ic_m T^2} e^{- \frac{s'^2}{2T^2} }   ds'}
  =  {\frac{1}{\sqrt{2\pi T^2}} \int_{-\infty  }^{\infty} e^{- \frac{s'^2}{2T^2} }   ds'} =1 .
 \end{equation}
 For the ease of notation, we will skip $'$ in the following. 
 Using the binomial expansion 
 $(x + y)^\alpha = \sum_{\beta=0}^\alpha \binom{\alpha}{\beta} x^{\alpha - \beta}y^\beta$,
 Eqn.~\eqref{equ_ralpha} becomes
  \begin{equation}
  \R^m_\alpha =  \sum_{\beta = 0}^\alpha \binom{\alpha}{\beta} 
   \left(\frac{1}{\sqrt{2\pi T^2}} \int_{-\infty}^{\infty}e^{\frac{- s^2}{2T^2}} s^{\alpha - \beta} ds \right) (ic_m T^2)^{\beta} . 
 \end{equation}
 The statement follows by using expression~\eqref{equ_I} for the term in brackets.
 \end{proof}
 
 Based on these results, we can further expand \eqref{eqn_sum2} to get
 \begin{equation}\label{eqn_sum3}
  \begin{split}
    \sum_{k = 0}^p \M_{jk}  \pp{}{t}  \V_k(t)  =  
    \sum_{m} \sum_{k = 0}^p \sum_{l = 0}^p \Fu_{m,k,l}(t)  \R_{jkl}^m e^{ic_m t} e^{- \frac{c_m^2 T^2}{2} } 
     \quad \text{for} \quad  j = 0,1,2,
  \end{split}
 \end{equation}
 where 
 $\M_{jk} = \M_\alpha$ for $ \alpha = j+k$ and 
 $\R_{jkl}^m = \R^m_\alpha $ for $ \alpha = j+k+l$.
 Note that $\Fu_{m,k,l} =\Fu_{m,l,k} $ for all $m,k,l$. 
 
 Equation~\eqref{eqn_sum3} will serve us in Section~\ref{sec:swingspring} 
 to develop a higher order averaging method for an ODE describing a 
 swinging spring. 
 But before we discuss this example, let us first present an explicit 
 representation of \eqref{eqn_sum3} and how the equation 
 behaves in the zero and infinity limits of $T$.

 \subsubsection{Explicit representation for polynomial order $p=2$}
 For the sake of clarity, we present an explicit representation of Eqn.~\eqref{eqn_sum3}
 for $p = 2$, i.e. $j,k,l = 0,1,2$. Proposition~\ref{prop_I}
 leads to a mass matrix $\M$ with coefficients (in terms of $\M_\alpha$):
 \begin{equation}
 \M_0 = 1 , \quad \M_1 = 0, \quad \M_2 = T^2 , \quad \M_3 = 0 ,\quad  \M_4 = 3T^4 ; 
 \end{equation}
 and we can explicitly write the inverse of $\M$ as
 \begin{equation}
 \M^{-1} = 
 \begin{pmatrix}
 1 & 0 & T^2  \\
 0 & T^2   & 0 \\
 T^2   & 0 & 3T^4   \\
 \end{pmatrix}^{-1} = 
    \begin{pmatrix}
    \frac{3}{2} & 0 & \frac{-1}{2T^2}  \\
    0 & \frac{1}{T^2}   & 0 \\
    \frac{-1}{2T^2}  & 0 & \frac{1}{2T^4}   \\
    \end{pmatrix} .
    \end{equation}
 Moreover, Proposition~\ref{prop_R} leads to the coefficients (in terms of $\R^m_\alpha$):
 \begin{equation}
 \begin{split}
  & \R_0^m =  1 , \quad \R_1^m =  ic_m T^2, \quad \R_2^m = T^2 - c_m^2 T^4, \quad \R_3^m =  i 3 c_m T^4 - i c_m^3 T^6, \\
  & \R_4^m = 3T^4 - 6 c_m^2 T^6 +  c_m^4 T^8, \quad \R_5^m =  i 15  cm T^6 - i 10 c_m^3  T^8 +    i c_m^5 T^{10},\\
 & \R_6^m = 15T^6  - 45 c_m^2 T^8    + 15 c_m^4 T^{10}  -  c_m^6 T^{12}.
 \end{split}
 \end{equation}
 With these coefficients, the explicit form of Eqn.~\eqref{eqn_sum3} reads
 \begin{equation}\label{eqn_mass_1}
 \begin{split}
& \begin{pmatrix}
M_{00} & 0 & M_{02} \\
0 & M_{11} & 0 \\
M_{20} & 0 & M_{22} \\
\end{pmatrix}
\begin{pmatrix}
 \dot\V_0(t)\\
 \dot\V_1(t)\\
 \dot\V_2(t)\\
\end{pmatrix} =
 \sum_{m} \sum_{k = 0}^2 \sum_{l = 0}^2 e^{ic_m t} e^{- \frac{c_m^2 T^2}{2} }
 \Fu_{m,k,l}  
 \begin{pmatrix}
  \R_{0kl}^m \\
  \R_{1kl}^m \\
  \R_{2kl}^m \\
 \end{pmatrix}
   \\
  & =  \sum_{m} e^{ic_m t} e^{- \frac{c_m^2 T^2}{2} } \times 
  \Big[ \Fu_{m,0,0} 
\begin{pmatrix}
  \R_{000}^m \\
  \R_{100}^m \\
  \R_{200}^m \\
 \end{pmatrix}
 + \Fu_{m,0,1} 
 \begin{pmatrix}
  \R_{001}^m \\
  \R_{101}^m \\
  \R_{201}^m \\
 \end{pmatrix}
 + \Fu_{m,0,2} 
 \begin{pmatrix}
  \R_{002}^m \\
  \R_{102}^m \\
  \R_{202}^m \\
 \end{pmatrix}
 %\\ & 
  +\Fu_{m,1,0} 
\begin{pmatrix}
  \R_{010}^m \\
  \R_{110}^m \\
  \R_{210}^m \\
 \end{pmatrix}
 \\ &
 + \Fu_{m,1,1} 
 \begin{pmatrix}
  \R_{011}^m \\
  \R_{111}^m \\
  \R_{211}^m \\
 \end{pmatrix}
 + \Fu_{m,1,2} 
 \begin{pmatrix}
  \R_{012}^m \\
  \R_{112}^m \\
  \R_{212}^m \\
 \end{pmatrix}
%   \\ & 
 +\Fu_{m,2,0} 
\begin{pmatrix}
  \R_{020}^m \\
  \R_{120}^m \\
  \R_{220}^m \\
 \end{pmatrix}
 + \Fu_{m,2,1} 
 \begin{pmatrix}
  \R_{021}^m \\
  \R_{121}^m \\
  \R_{221}^m \\
 \end{pmatrix}
 + \Fu_{m,2,2} 
 \begin{pmatrix}
  \R_{022}^m \\
  \R_{122}^m \\
  \R_{222}^m \\
 \end{pmatrix} 
 \Big]   .
  \end{split}
 \end{equation}
 Bringing $\M$ to the LHS, we result in
 \begin{equation}\label{eqn_mass_2}
 \begin{split}  
 \begin{pmatrix}
 \dot\V_0(t)\\
 \dot\V_1(t)\\
 \dot\V_2(t)\\
\end{pmatrix} =
 \sum_{m} \sum_{k = 0}^2 \sum_{l = 0}^2   e^{ic_m t} e^{- \frac{c_m^2 T^2}{2} }  
 \Fu_{m,k,l} 
 \left(
 \begin{pmatrix}
\frac{3}{2}  \\
0   \\
\frac{-1}{2T^2}    \\
\end{pmatrix}
\R_{0kl}^m 
 +
\begin{pmatrix}
  0   \\
 \frac{1}{T^2}     \\
  0     \\
\end{pmatrix}
 \R_{1kl}^m 
+
\begin{pmatrix}
 \frac{-1}{2T^2}  \\
  0 \\
  \frac{1}{2T^4}   \\
\end{pmatrix}
\R_{2kl}^m \right) .
 \end{split}
 \end{equation}
 As it is useful for the asymptotic study further below, we present a
 fully explicit version of \eqref{eqn_mass_2} in terms of tendencies
 for $\V_0$, $\V_1$, and $\V_2$ in Appendix~\ref{append_explicit}.
 This explicit representation illustrates clearly the coupling between
 the principal $\V_0$ term and the higher order ones $\V_i, i =1,\dots
 p$ and allows us to visualize the expansion's asymptotic behavior
 with respect to the averaging window $T$.  Here we explicitly see the
 smoothing effect for large averaging window $T$, as all of the terms
 are exponentially small in $T$ except for resonant cases where $c_m$
 is small.

\subsubsection{Asymptotic behaviour with respect to $T$}
\label{sec:asymptotics}

We illustrate the asymptotic behavior of the expansion~\eqref{eqn_mass_2} on the 
example presented above for $p=2$, but the results hold for general $p$.
In particular, we investigate its behaviour for large and small $T$,
which can be inferred from an explicit representation of \eqref{eqn_mass_2}, as 
shown in Appendix~\ref{append_explicit}.
Here, we present a general discussion while showing a concrete example in Section~\ref{sec:swingspring}.

\paragraph{Limit $T\rightarrow 0$:} 
We study equation~\eqref{eqn_mass_2} in the limit $T \rightarrow 0$.
In this limit, many terms vanish (cf. Appendix~\ref{append_explicit}) and we arrive at
\begin{equation}\label{eqn_limit_T0}
 \begin{split}
  \begin{pmatrix}
 \dot\V_0(t)\\
 \dot\V_1(t)\\
 \dot\V_2(t)\\
\end{pmatrix}
= \sum_{m}  e^{ic_m t}   \times  
\begin{pmatrix}
\Fu_{m,0,0}    \\
i c_m \Fu_{m,0,0}  + 2 \Fu_{m,0,1}       \\
- \frac{1}{2}c_m^2\Fu_{m,0,0}  + \Fu_{m,1,1} + 2 i c_m \Fu_{m,0,1}  + 2 \Fu_{m,0,2}      \\
\end{pmatrix} .
 \end{split}
 \end{equation}
 In the limit $T \rightarrow 0$ the higher order coefficients thus
 decouple from the lowest order coefficient $\V_0$, i.e. $\V_0$ does
 not depend on the higher order terms. When considering only $\V_0$, which
 is all that we are interested in, we recover in fact the non-averaged
 equation~\eqref{equ_gen_V}.

 \begin{remark}
 Considering the full system \eqref{eqn_limit_T0}, higher order terms
 do not vanish and in the numerical algorithm below, these terms will
 be calculated too.  However, for $T$ very small, the condition number
 of matrix $\M$ is very large leading to increased numerical errors
 (cf. discussion to Figure~\ref{fig_2d_pT0}).  In practice this can be
 dealt with by choosing a better conditioned basis such as an
 orthogonal basis with respect to the $\rho$-weighted inner product
 (which would be Hermite polynomials in the examples we have computed
 here).  However, this only becomes an issue in the small $T$ limit,
 and we are interested in larger $T$ values to facilitate larger
 timesteps in numerical integrations.\end{remark}

\paragraph{Limit $T\rightarrow \infty$:} We study expansion~\eqref{eqn_mass_2}
for $T\rightarrow \infty$. To this end, we have to 
distinguish between the terms in the sum on the RHS of \eqref{eqn_mass_2} with 
 coefficients (i) $c_m \neq 0$ and (ii) $c_m =  0$ 
 (see e.g. the coefficients in Table~\ref{table_coeff}).
 
 Case 1: for $c_m \neq 0$ there follows $e^{ic_m t} e^{\frac{-c_m^2 T^2}{2}} =0 $ for $T\rightarrow \infty$
 such that the corresponding terms on the RHS of \eqref{eqn_mass_2} are zero (cf. Appendix~\ref{append_explicit}). 
 
 Case 2: for $c_m =  0$ there follows $e^{ic_m t} e^{\frac{-c_m^2 T^2}{2}} =1 $ for any $T$.
 Hence, not all terms on the RHS vanish (cf. Appendix~\ref{append_explicit}) and we arrive at the following equation:
\begin{equation}\label{eqn_limit_Tinfty}
 \begin{split}
  \begin{pmatrix}
 \dot\V_0(t)\\
 \dot\V_1(t)\\
 \dot\V_2(t)\\
\end{pmatrix}
= \sum_{m}    
\begin{pmatrix} 
\Fu_{m,0,0}   -  3 T^4\Fu_{m,2,2}\\
2 \Fu_{m,0,1} + 6 T^2\Fu_{m,1,2} \\
 \Fu_{m,1,1} + 2 \Fu_{m,0,2} + 6 T^2\Fu_{m,2,2} 
\end{pmatrix}  .
 \end{split}
 \end{equation}
Similar to the zero limit \eqref{eqn_limit_T0}, there is a decoupling of the higher order modes from the principle one:
the $\Fu_{m,0,0}$ term vanishes in the higher order modes. Therefore, as all $\V_i,i=1,..p$ are 
initialized with zero, they remain zero including the $\Fu_{m,2,2}$ that would contribute 
otherwise to $\V_0$. 
Therefore, we recover indeed in this limit an asymptotic approximation of \eqref{equ_gen_V}
in form of \citet{majda1998averaging}.

 \section{Example ODE: swinging spring}
 \label{sec:swingspring}
 
 We illustrate the properties of the higher order averaging method introduced above 
 on an ODE that describes the dynamics of a swinging spring, 
 a model due to Peter Lynch \citep{holm2002stepwise}. 
 Although idealized, this model shows an interesting analogy to geophysical flows as it 
 exhibits a high sensitivity of small scale oscillation on the large scale dynamics.

 \subsection{The swinging spring}
 \label{sec_ode}
 
 For the swinging spring (expanding pendulum) model of Holm and Lynch~\citep{holm2002stepwise}, we consider 
 the following equations of motion
 \begin{equation}\label{ode}
 \begin{split}
  \ddot x(t) + \omega_R^2 x(t) = \lambda x(t) z(t) ,\\
  \ddot y(t) + \omega_R^2 y(t) = \lambda y(t) z(t) ,\\
  \ddot z(t) + \omega_Z^2 z(t) = 0.5 \lambda (x(t)^2 + y(t)^2) ,
 \end{split}
 \end{equation}
 where $x(t),y(t),z(t)$ are (time dependent) Cartesian coordinates centered at the point of equilibrium, 
 $\omega_R = \sqrt{g/l}$ is the frequency of linear pendular motion, and 
 $\omega_Z = \sqrt{k/m_u}$ is the frequency of elastic oscillation 
 with spring constant $k$, unit mass $m_u$ and gravitational constant $g$.
 $l_0$ denotes the unstretched length while $l$ is the length at equilibrium 
 such that $\lambda = l_0 \omega_Z^2 / l^2$. The dot denotes the time derivative
 $\frac{d}{dt}$.

 To bring Eqn.~\eqref{ode} into the abstract form~\eqref{equ_gen_U}, 
 we first reformulate the former into the first order system 
 \begin{equation}\label{ode1st}
 \begin{split}
 & \dot x(t) - \omega_R p_x(t) = 0, \quad \dot p_x(t) + \omega_R x(t) = \frac{\lambda}{\omega_R} x(t) z(t) ,\\
 & \dot y(t) - \omega_R p_y(t) = 0, \quad \dot p_y(t) + \omega_R y(t) = \frac{\lambda}{\omega_R} y(t) z(t) ,\\
 & \dot z(t) - \omega_Z p_z(t) = 0, \quad \dot p_z(t) + \omega_Z z(t) = \frac{\lambda}{\omega_Z} (x(t)^2 + y(t)^2),
 \end{split}
 \end{equation}
 with $p_x := \pp{p}{x}$ and so forth. Then, by introducing the complex numbers 
 $ X(t) = x(t) + i p_x(t) , \ Y(t) = y(t) + i p_y(t) , \  Z(t) = z(t) + ip_z(t) , $
 we can rewrite Eqn.~\eqref{ode1st} as
 \begin{align}
  \dot X(t) + i\omega_R X(t) = i \frac{\lambda}{\omega_R}  Re(X(t)) Re(Z(t)) ,\\
  \dot Y(t) + i\omega_R Y(t) = i \frac{\lambda}{\omega_R}  Re(Y(t)) Re(Z(t)) ,\\
  \dot Z(t) + i\omega_Z Z(t) = i \frac{\lambda}{2\omega_Z} (Re(X(t))^2 + Re(Y(t))^2) .
 \end{align}
 This form of the equations can be expressed in the abstract 
 form~\eqref{equ_gen_U} by defining the velocity as 
 \begin{equation}
 \U(t) = \left(\begin{array}{c}
            X(t) \\ Y(t) \\ Z(t)
           \end{array}
    \right) = 
\left(\begin{array}{c}
 x(t) + ip_x(t)\\ y(t) + ip_y(t)\\ z(t) + ip_z(t) 
 \end{array}\right) ,
\end{equation}
 the linear operator as
 \begin{equation}\label{lin_op}
 L = \left(\begin{array}{c}
            - i \omega_R \\ 
             -i \omega_R \\ 
             -i\omega_R \rho
           \end{array}
    \right)  ,
 \end{equation}
 and the nonlinear (quadratic) operator as
 \begin{equation}\label{nonlin_op}
 N(\U(t),\U(t)) = i \frac{ \lambda}{ \omega_R} 
 \left(
 \begin{array}{c}
    Re(X(t))Re(Z(t)) \\ 
    Re(Y(t))Re(Z(t))  \\  
    \frac{\rho}{2}  (Re(X(t))^2 + Re(Y(t))^2)  
 \end{array}
 \right) .
 \end{equation} 
 
 Using these definitions, Eqn.~\eqref{ode} can also be written in the 
 modulated form~\eqref{equ_gen_V}. 
 That is, the time evolution of the velocity $\V(t)$ with transformation
 \begin{equation}
 \V(t) = e^{-tL} \U(t) , \quad
 \V(t) = \left(\begin{array}{c}
            \hat X(t) \\ \hat Y(t) \\ \hat Z(t)
           \end{array}
    \right)  ,
\end{equation}
under the linear operator $L$ of \eqref{lin_op} is given by Eqn.~\eqref{equ_gen_V} 
with nonlinear operator $N$ of \eqref{nonlin_op}. 
Explicitly, we arrive at the component equations:
 \begin{align}
\dot{\hat X} & = c_{xy}  \hat X \hat Z   e^{- i \omega_R \rho t}
             + c_{xy}  \hat X \overline{\hat Z} e^{i \omega_R \rho t} 
             + c_{xy}  \hat Z \overline{\hat X} e^{- i \omega_R \rho t + 2 i \omega_R t} 
             + c_{xy}   \overline{\hat X} \overline{\hat Z} e^{i \omega_R \rho t + 2 i \omega_R t} \label{eqn_1}\\
 \dot{\hat Y} & = c_{xy}  \hat Y \hat Z   e^{- i \omega_R \rho t}
             + c_{xy}  \hat Y \overline{\hat Z} e^{i \omega_R \rho t} 
             + c_{xy}  \hat Z \overline{\hat Y} e^{- i \omega_R \rho t + 2 i \omega_R t} 
             + c_{xy}   \overline{\hat Y} \overline{\hat Z} e^{i \omega_R \rho t + 2 i \omega_R t} \label{eqn_2}\\  
 \dot{\hat Z} & = c_z (\hat X^{2} + \hat Y^{2})  e^{i \omega_R \rho t - 2 i \omega_R t}
                + 2c_z (\hat X  \overline{\hat X} + \hat Y \overline{\hat Y} ) e^{i \omega_R \rho t}
                + c_z  (\overline{\hat X}^{2} +  \overline{\hat Y}^{2} )e^{i \omega_R \rho t + 2 i \omega_R t} \label{eqn_3}                  
\end{align}
with coefficients $c_{xy} = 3  i  \omega_R\rho^2/16 $ and $c_z    = 3  i  \omega_R\rho/32$. 
Overline notation denotes the conjugate. 
For ease of notation, we omitted here to include the time dependency of the transformed components
$\hat X,\hat Y, \hat Z$. 
After solving for $\V$, we have to transform back 
to $\U$ with $\U(t) = e^{tL} \V(t)$ to find solutions of \eqref{equ_gen_U}.

The explicit formulation in \eqref{eqn_1}--\eqref{eqn_3} 
allows us to identify the problem (or ODE) dependent coefficient $c_m$ and 
$\Fu_{m,k,l}= (F^x_m,F^y_m,F^z_m) \ \forall k,l$ of the expansion~\eqref{eqn_sum3}. 
Note that the latter coefficients change with $m$ while they are the same 
for all $k,l$. The following table summarizes these coefficients:
 \begin{table}[h]\centering
\begin{tabular}{ l| l | l | l}
   $c_m$ & $F^x_m$ &  $F^y_m$ & $F^z_m$ \\\hline
  $c_1:= (\rho - 2)\omega_R$ & 0 & 0 & $F^z_1 = c_z  (\hat X \hat X + \hat Y\hat Y) $ \\
  $c_2:= \rho \omega_R$ & $F^x_2 = c_{xy}  \hat X \overline{\hat Z} $ & $F^y_2 = c_{xy}  \hat Y \overline{\hat Z} $ & $F^z_2 = 2c_z (\hat X \overline{\hat X} + \hat Y \overline{\hat Y})$ \\
  $c_3: =  (\rho + 2)\omega_R$ & $F^x_3 = c_{xy}  \overline{\hat X}  \overline{\hat Z} $ & $F^y_3 = c_{xy}  \overline{\hat Y}  \overline{\hat Z} $ &
      $F^z_3 = c_z   (\overline{\hat X} \overline{\hat X} + \overline{\hat Y}  \overline{\hat Y})$ \\
  $c_4 := - \rho \omega_R$ &$F^x_4 = c_{xy}  \hat X  \hat Z $ & $F^y_4 = c_{xy}  \hat Y  \hat Z $ & 0 \\
  $c_5 := -(\rho + 2)\omega_R$ & $F^x_5 = c_{xy}  \overline{\hat X} \hat Z $  & $F^y_5 = c_{xy}  \overline{\hat Y} \hat Z $ & 0 \\
\end{tabular}
\caption{Coefficients of expansion~\eqref{eqn_sum3} for the swinging spring model.}
\label{table_coeff}
\end{table}
 
 The coefficients from Table~\ref{table_coeff} are used in expansion~\eqref{eqn_sum3} to obtain 
 the higher order phase averaged system.

 \paragraph{Limits for $T$:} Similarly to Section~\ref{sec:expint}
 but here for concrete model equations, i.e. 
 for the ODE \eqref{ode} of the swinging spring, 
 we present the equations describing the time evolution of $\V_0$ 
 resulting from taking the limits of \eqref{eqn_sum3} in $T$,
 i.e. for (i) $T \rightarrow 0$ and (ii) $T \rightarrow \infty$.

 Case (i): as discussed in \eqref{eqn_limit_T0}, in the $T$ to zero limit
 the higher order terms decouple from the principle mode $\V_0$ such that 
 for the time evolution of the latter only the $\Fu_{m,0,0}$ terms contribute. 
 In this limit and using the coefficients of Table~\ref{table_coeff}, 
 the resulting set of equations for $\V_0$
 coincides with equations \eqref{eqn_1}--\eqref{eqn_3} of the exact model.

 Case (ii): as shown in \eqref{eqn_limit_Tinfty}, 
 all terms on the RHS of the expansion~\eqref{eqn_sum3} vanish in the $T \rightarrow \infty$ 
 limit except of those with $c_m = 0$ (i.e. here for $\rho =2$). Considering further that 
 higher order terms in $\V$ (i.e. for $k>0$ and/or $l>0$) are initialized with zero, 
 these terms, and in particular the $\Fu_{m,2,2}$ terms in the first line in \eqref{eqn_limit_Tinfty}, 
 remain zero. 
 Then, for any approximation order $p$, the resulting set of equations 
 for $\V_0$ are given by 
    \begin{equation}\label{equ_avg_Tinfty}
    \begin{split}
    \dot{\hat X} = c_{xy}  \hat Z \overline{\hat X} , 
    \qquad 
    \dot{\hat Y}  =   c_{xy}  \hat Z \overline{\hat Y} , 
    \qquad
    \dot{\hat Z} = c_z (\hat X^{2} + \hat Y^{2}) .         
    \end{split}
    \end{equation}
    This is exactly the phase-averaged model that was derived by
    \citet{holm2002stepwise} using Witham averaging.

 \section{Numerical results} 
 \label{sec:num}
 
 We investigate accuracy and stability of the higher order phase averaging method on numerical simulations
 of the swinging spring model as introduced in the previous section. 
 We compare solutions obtained from the approximated models with those 
 from the non-averaged (exact) equations~\eqref{equ_gen_V}. In particular,
 the convergence behavior of solutions obtained from averaging models with increasing approximation 
 order to exact solutions will be investigated for finite averaging windows. 
 Besides showing that higher order phase averaging indeed leads to a better approximation
 of the fast oscillations around the slow manifold than standard (first order) averaging, 
 we illustrate that this also reduces the drift from averaged solutions away from the 
 exact solutions, which we experienced in our simulations.

 Analogously to \citet{holm2002stepwise}, we choose for the numerical simulations the 
 following parameters for the ODE in equation~\eqref{ode}:
 $m_u = 1\,$kg, $l=1\,$m, $g= \pi^2\,$ m s$^{-2}$, $k = 4 \pi^2$ kg s$^{-2}$ 
 and hence $\omega_R = \pi$ and $\omega_Z = 2\pi$ with resonance condition $\omega_Z = 2\omega_R$.
 The unstretched length $l_0 = 0.75\,$m follows from the balance $k\, (l - l_0) = g\, m_u$ 
 allowing us to determine $\lambda$ in \eqref{ode}. Finally, the equations are 
 initialized with 
 \begin{equation}\notag
  (X_0(0),Y_0(0),Z_0(0)) = (0.006,\, 0,\, 0.012)\,\text{m} , \quad  (\dot X_0(0),\dot Y_0(0),\dot Z_0(0)) = (0,\, 0.00489,\, 0)\,\text{m s}^{-1},
 \end{equation}
 for the principle mode $\V_0$ while values for $\V_{\neq 0}$ and $\dot \V_{\neq 0}$ are all initialized with zero 
 and reset back to zero after an integration window $\Delta T$. 
 This resetting avoids potential instabilities in the higher order modes in $\V_i,i=1,...p$ 
 (cf. discussion to Figure~\ref{fig_Vxyz_1000s}). If not stated otherwise, we 
 use a resetting window of $\Delta T = 100\,$s while other choices are also possible 
 because this choice does not significantly impact on the accuracy of the solutions,
 as shown below in Figure~\ref{fig_Vx1}. 
 
 Note that with this parameter choice, 
 the fast and slow motions occurring in the simulations 
 have a scale separation at the order of about $\epsilon \approx 0.01$ (in good agreement with 
 values for geophysical flows).

 We integrate the ODEs in time using the adaptive RK4/5 explicit
 integrator implemented through the Python Scipy package. We integrate
 up to 1000 seconds (corresponding to 1000 vertical oscillations)
 using adaptive relative and absolute tolerance of $1.49012e^{-8}$ unless
 otherwise stated.
 
 To ease notation, we consistently denote also 
 solutions of the exact (equations~\eqref{eqn_1}--\eqref{eqn_3}) and the averaged ($p=0$) models 
 with $\U_0 = (X_0, Y_0, Z_0)$ and $\V_0 = (\hat X_0, \hat Y_0, \hat Z_0)$ even 
 though there are no higher order terms present in these cases.

 \begin{figure}[t]\centering
 \begin{tabular}{cc}
  \includegraphics[scale=.5]{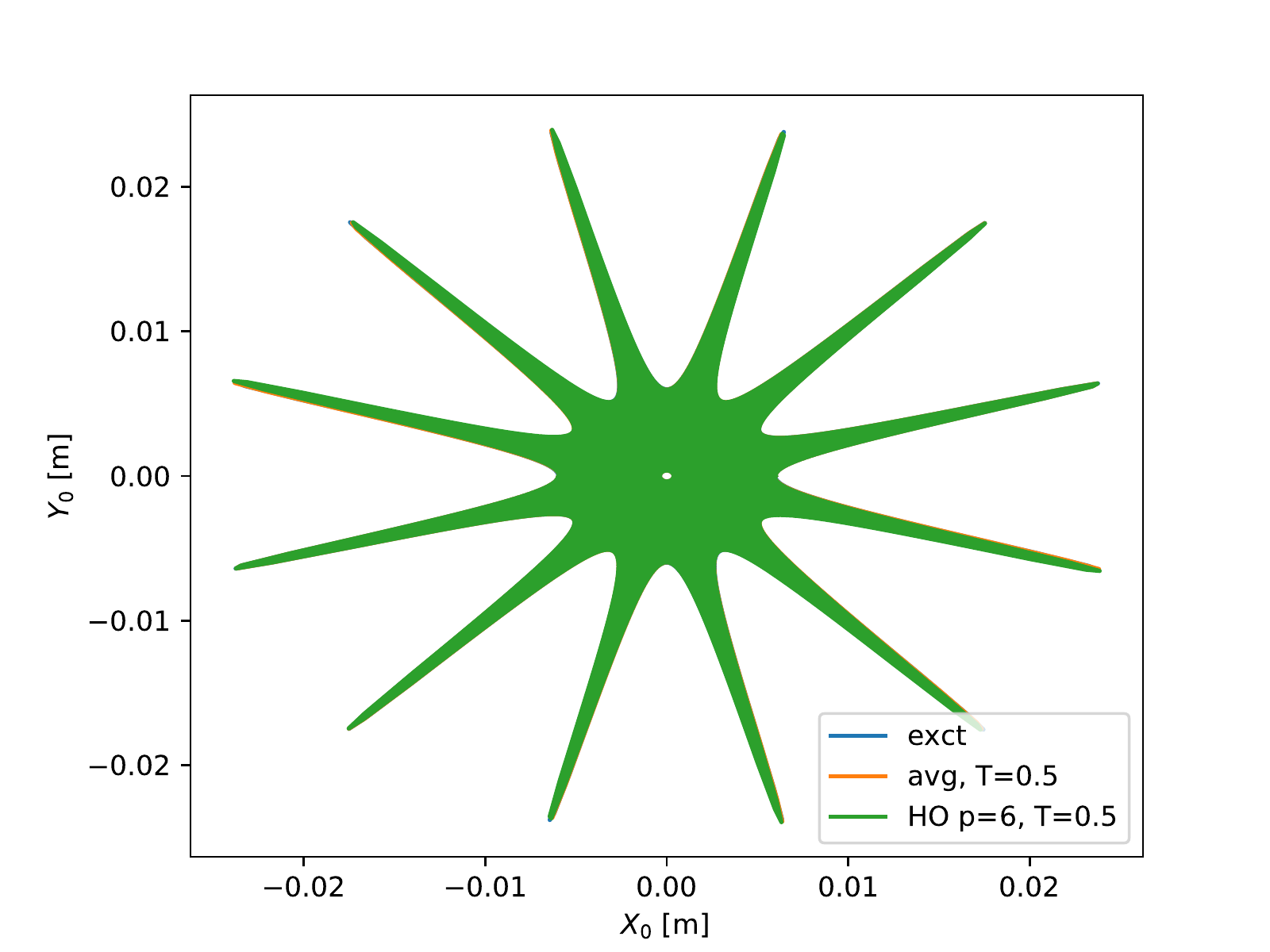} &
  \includegraphics[scale=.5]{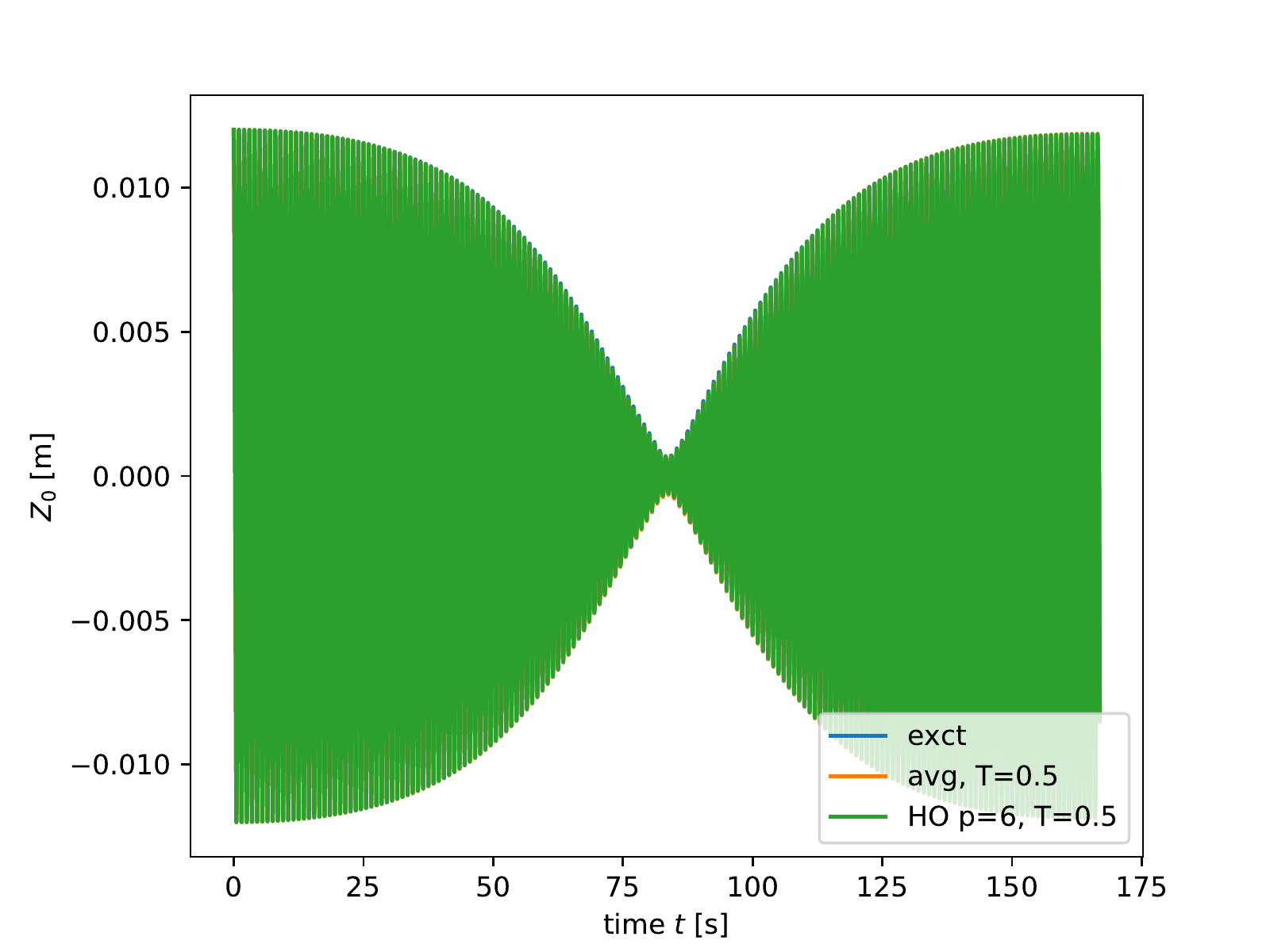} 
  \end{tabular}
  \caption{Left: horizontal projection of $X_0,Y_0$ components of pendulum movement 
  for $1000\,$s for the exact (exct), averaged (avg) and higher order (HO) averaged models.
  Right: Vertical projection of $Z_0$ component of pendulum movement over time $t$ 
  for $167\,$s (corresponding approximately to the first of six modulation cycles). 
  }
  \label{fig_xy_1000s}
  \end{figure}

 \subsection{Large scale dynamics, smoothing behavior and stability}

 First, we verify that the solutions of our phase averaged models 
 stay close to the corresponding exact solutions and capture the same characteristics typical to the swinging spring model (given, for instance, by the stepwise precision of the pendulum in the $(x,y)$-plane). 
 To this end, we compare for a long term simulation of $1000\,$s the solutions in $\U_0$ of the 
 exact model, the averaged one with $p=0$ that corresponds to \citet{peddle2019parareal} and a higher order (HO) version with, e.g., $p=6$. In order to guarantee stable simulations, we reset the 
 higher order values in $\U_i$ (or $\V_i$), $i=1,...p$, to zero, here after a resetting time of $\Delta T = 100\,$s. 
 The size of this resetting window $\Delta T$ does only marginally impact the accuracy (cf. 
 Figure~\ref{fig_Vx1}) but resetting prevents the averaged solutions to become unstable, 
 in particular for large $p$ and long simulations (cf. Figure~\ref{fig_Vxyz_1000s}).

 \paragraph{Large scale dynamics.}
 In Figure~\ref{fig_xy_1000s} (left) we see the projection of the pendulum movement onto the horizontal $(x,y)$-plane. 
 Both, the solutions for the averaged and HO averaged models are very 
 close to the exact solutions, almost indistinguishable in the given figures that show 
 the large scale dynamics. Here, we used an averaging window of $T = 0.5\,$s while for smaller
 $T$ the accuracy does increase (cf. e.g. Figure~\ref{fig_2d_pTmiddle}).
 In particular, the solutions of the averaged models capture very well the stepwise precision 
 of the swinging spring model.  
 In the right panel, the movement of the pendulum in the $Z$ axis versus time is shown. 
 Also here, both averaged and HO averaged solutions are very close to the exact one. 
 The figure illustrates nicely fast oscillations with a slowly varying amplitude envelope.
 This typical behavior of the swinging spring model makes it thus a perfect test problem for 
 our averaging method developed above.

 \begin{figure}[t]\centering
 \begin{tabular}{cc}
   \includegraphics[scale=.8]{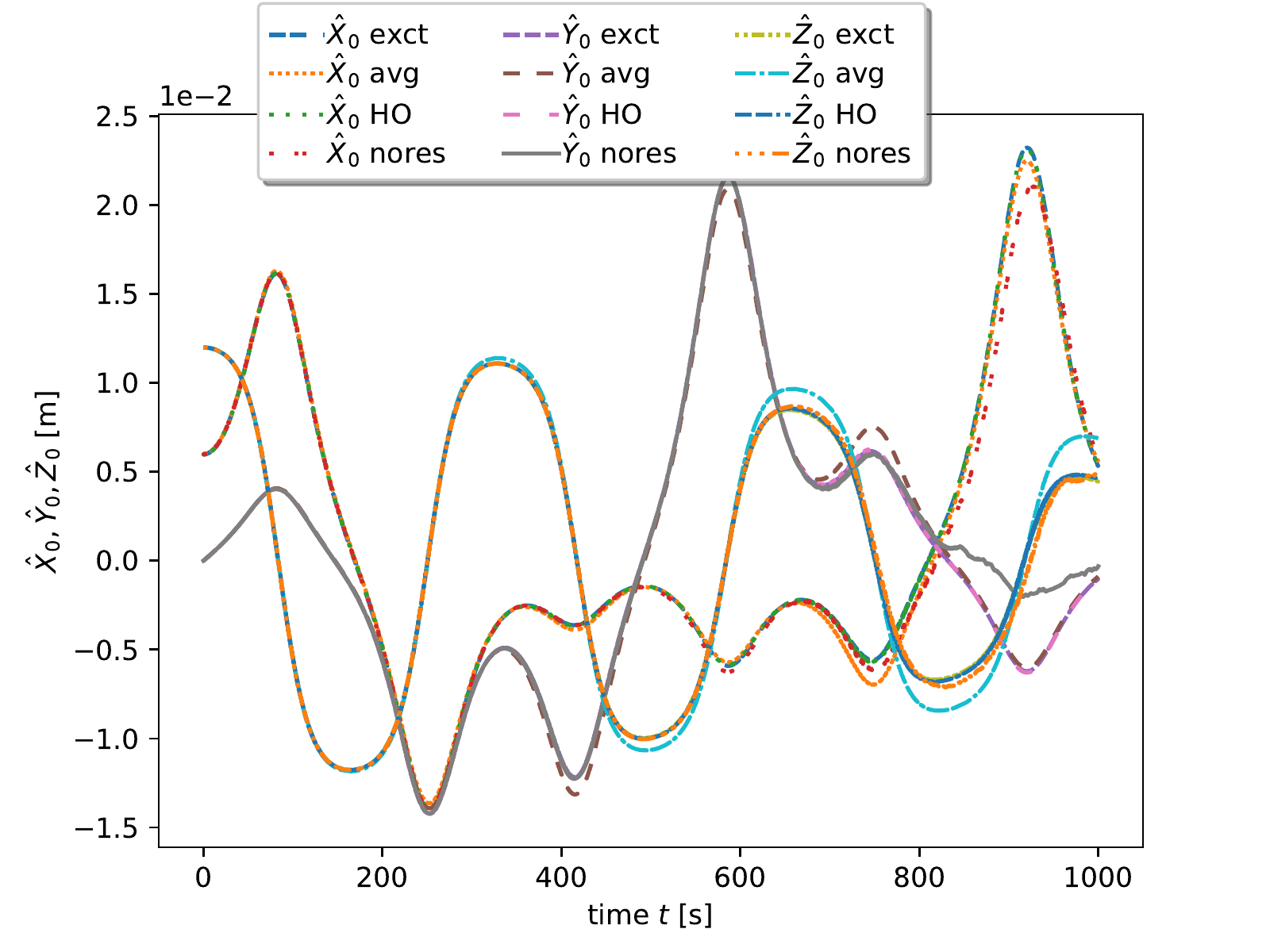} 
  \end{tabular}
  \caption{
  $\hat X_0,\hat Y_0, \hat Z_0$ solutions 
  of the exact (exct), averaged (avg with $p=0$) and 
  higher order (HO) models (with $p=8$) and $T=0.2\,$s; reset 
  every $\Delta T = 100\,$s or with no resetting (nores). 
  Here, no resetting leads to instabilities for high $p \geq 8$
  after about $800\,$s.
  }
  \label{fig_Vxyz_1000s}
  \end{figure}

 \paragraph{Smoothing properties.}

 Next, we illustrate how the proposed averaging algorithm filters 
 fast motions from the dynamics of the swinging spring model 
 while still accounting for their effect on the slow dynamics. 
 Given the factorization assumption~\eqref{factoring} that underlies
 equations~\eqref{equ_gen_V}, 
 it is useful to study in more detail solutions in terms of the 
 velocity $\V$ because the latter provide solutions of the slow dynamics 
 without most of the fast oscillations intrinsic to $\U$.
 As such, $\V$ (in particular $\V_0$) 
 allows us to clearly illustrate how the averaging method filters 
 fast oscillations and how the higher order methods better approximate 
 small scale oscillations around the slow dynamics.  
 
 Figure~\ref{fig_Vxyz_1000s} shows the solutions for 
 $\hat X_0,\hat Y_0, \hat Z_0$
 of the exact, the averaged, and the HO averaged models without resetting and with 
 resetting at $\Delta T = 100\,$s. 
 We use an averaging window of $T=0.2\,$s and $p=8$ for the HO method as this provides 
 sufficiently accurate solutions (cf. Figure~\ref{fig_2d_pTmiddle}). 
 The figure illustrates also a solutions of the HO model for $p=8$ 
 ($\hat X_0$ nores, $\hat Y_0$ nores, $\hat Z_0$ nores) that becomes slightly unstable 
 after about $800\,$s in case no resetting is used. This instability, however, 
 is easily controlled by simple resetting the higher order velocity component 
 at each $\Delta T$ to zero, resulting in the stable solutions 
 $\hat X_0$ HO, $\hat Y_0$ HO,  $\hat Z_0$ HO.

 \begin{figure}[t]\centering
 \begin{tabular}{cc}
    \includegraphics[scale=.4]{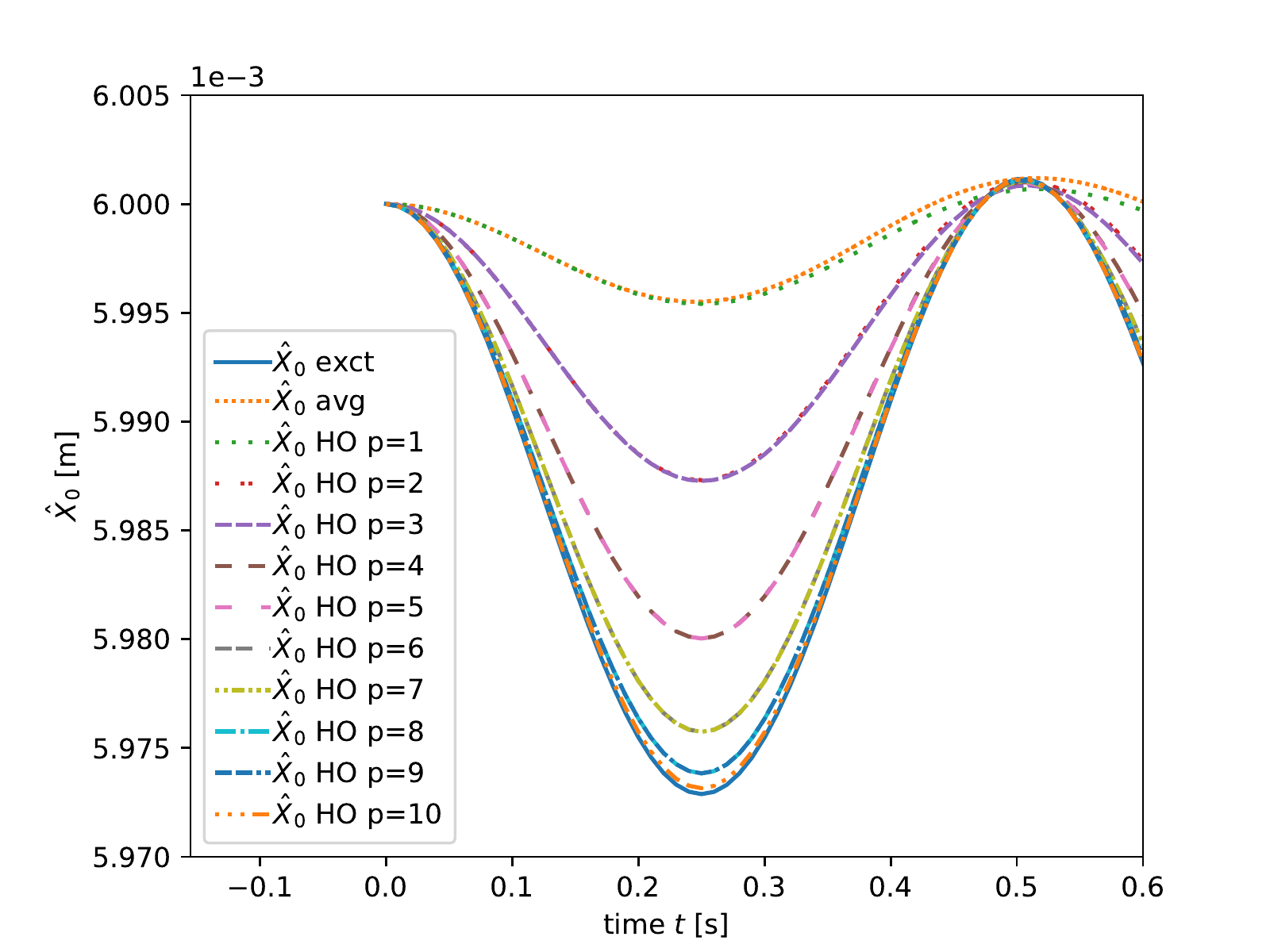} &
    \includegraphics[scale=.4]{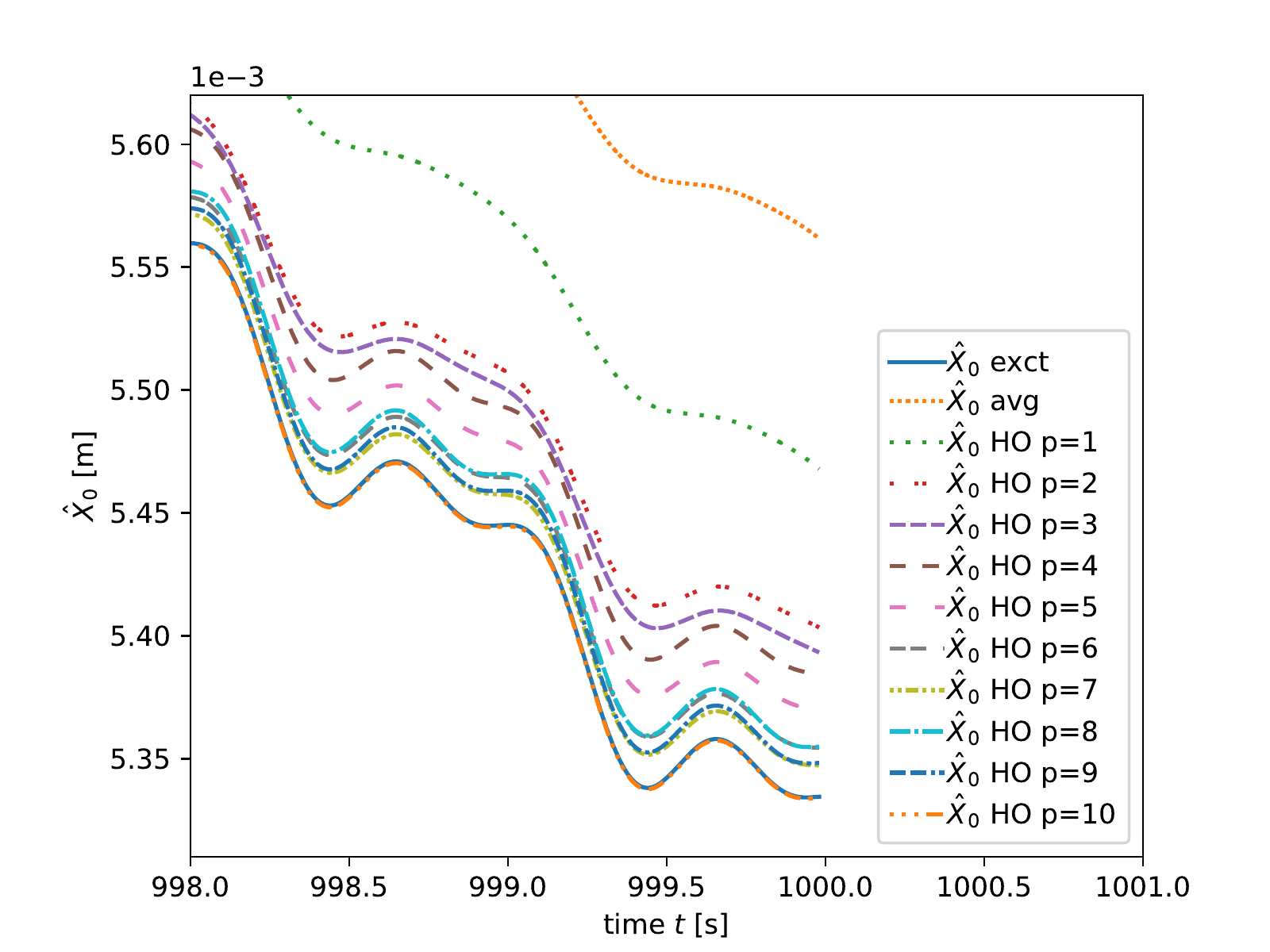} \\
    \includegraphics[scale=.4]{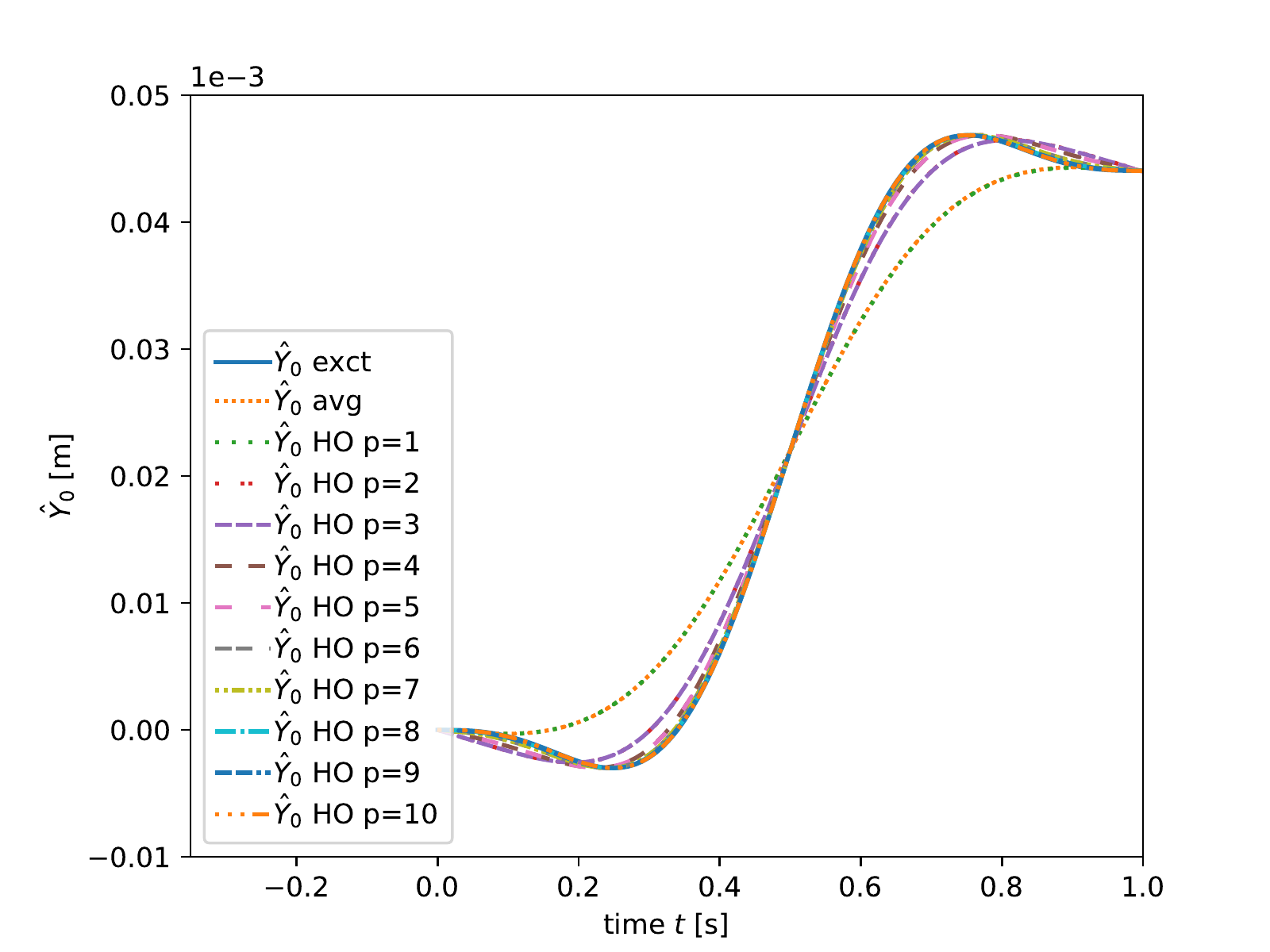} &
    \includegraphics[scale=.4]{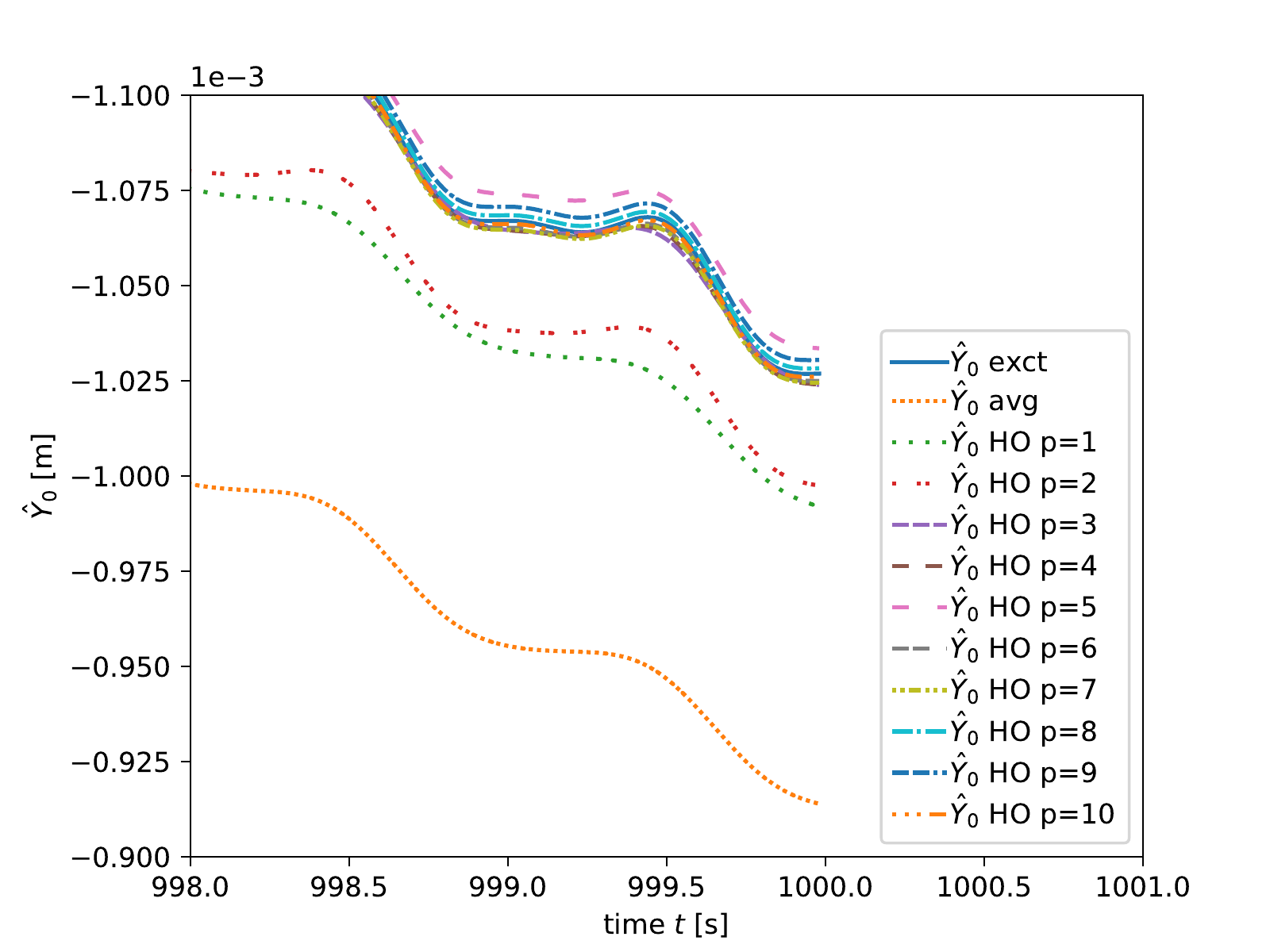} \\
    \includegraphics[scale=.4]{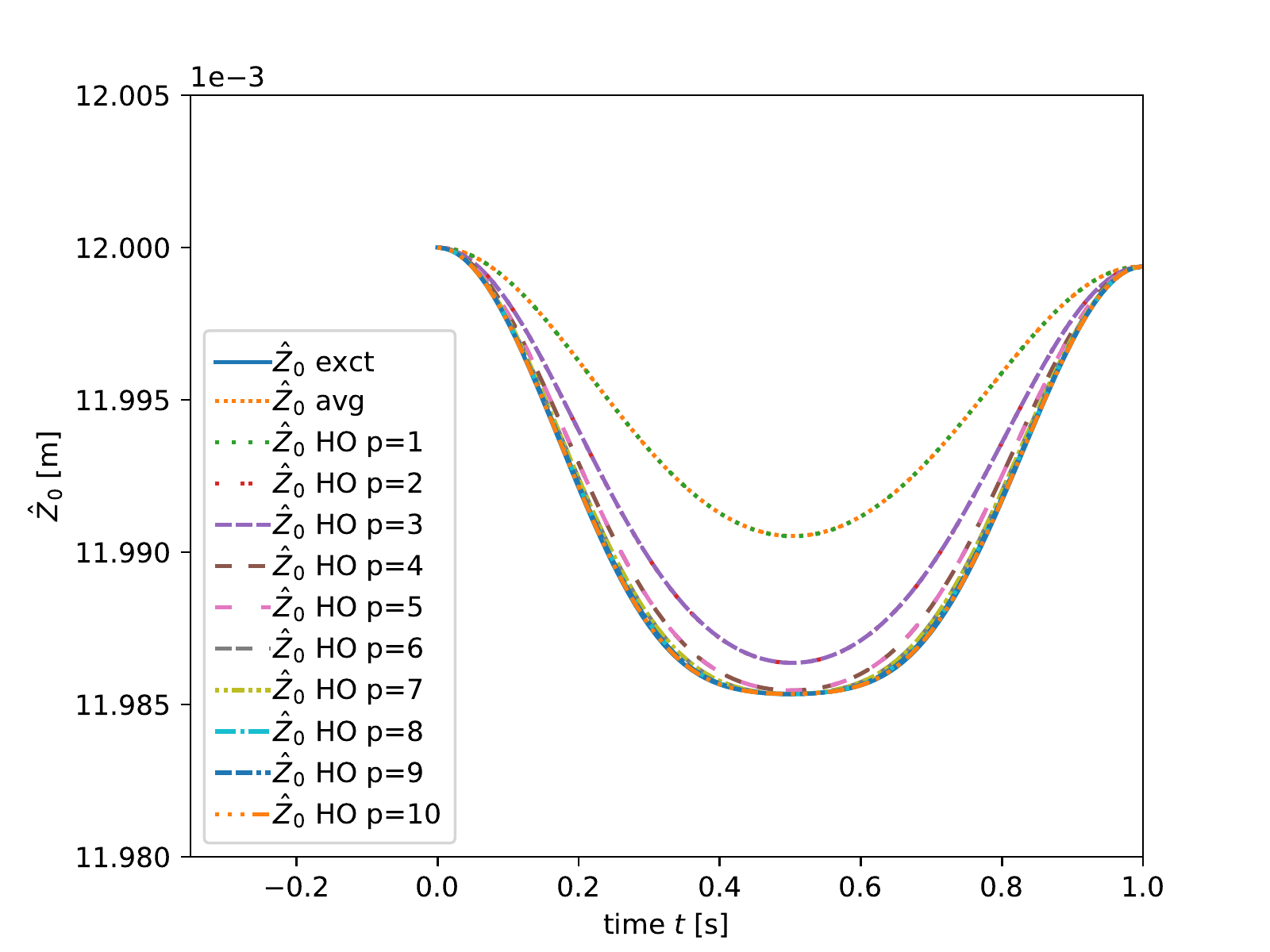} &
    \includegraphics[scale=.4]{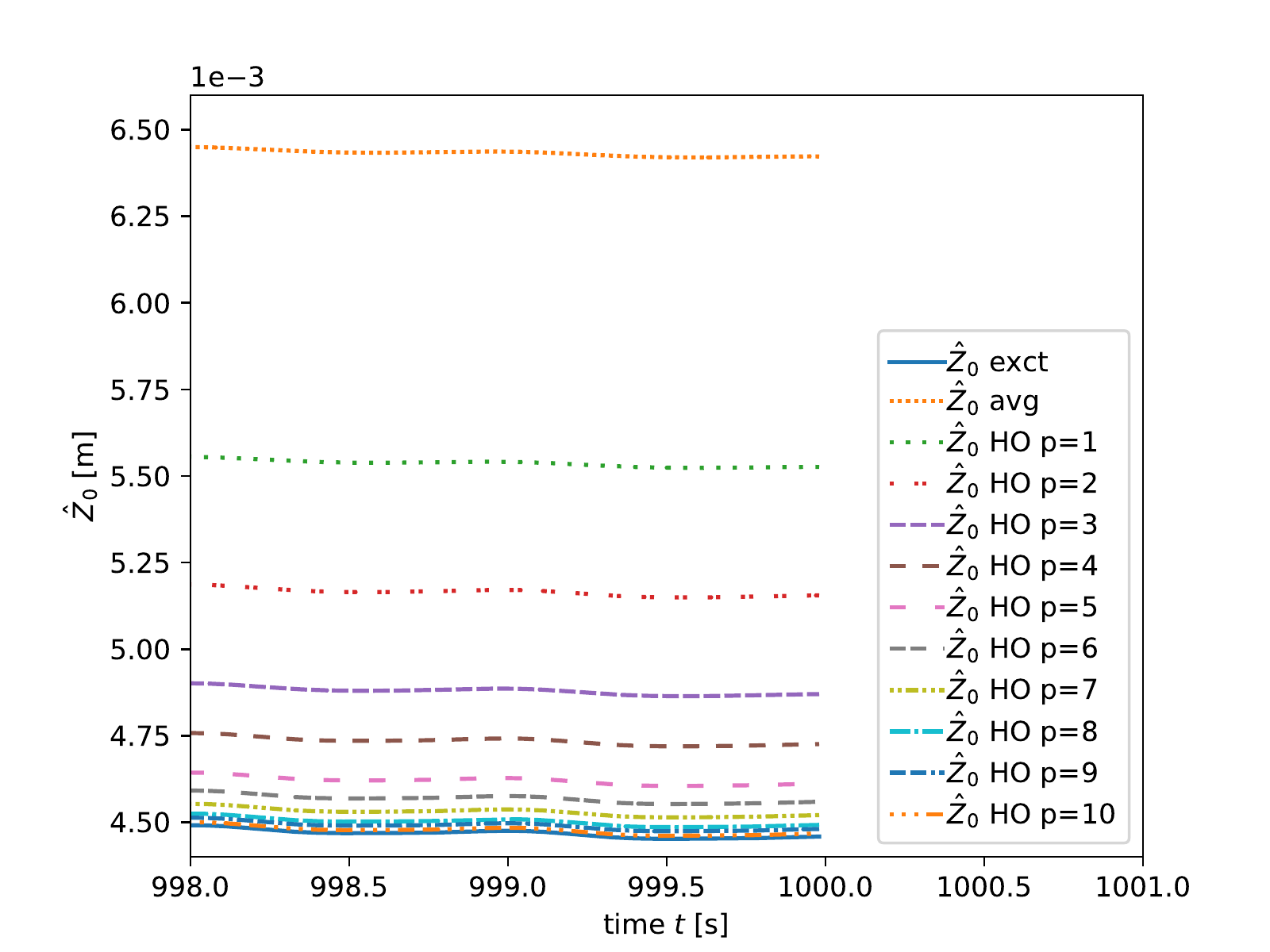}     
    \end{tabular}
  \caption{
  $\hat X_0,\hat Y_0, \hat Z_0$ solutions of the exact (exct), 
  averaged (avg with $p=0$) and higher order (HO) models
  with resetting ($\Delta T = 100\,$s)
  and averaging window $T = 0.15\,$s. 
  }
  \label{fig_Vxyz_1000s_XYZzoom}
  \end{figure}

 Next, we compare the exact solutions in all three components with those of the averaged models.
 We realize that for longer simulation times, the averaged solutions slightly drift away
 from the exact ones, as shown on the right panels in Figure~\ref{fig_Vxyz_1000s_XYZzoom}.
 This is particularly obvious in the $\hat Z_0$ component. Further in this figure, we see clearly that 
 with increasing approximation order $p$ the solutions' drift reduces significantly 
 whilst the small scale oscillations are better approximated. 
 Note that the standard averaging method with $p =0$ corresponds to the original version as introduced in 
 \citet{haut2014asymptotic}. As discussed therein, the averaging leads to some drift away 
 from the exact solutions. As the figure illustrates clearly, including the higher averaging
 terms significantly reduces this drift while providing solutions much closer to the 
 exact ones.

 When considering short integration times, as shown on 
 the left panels in Figure~\ref{fig_Vxyz_1000s_XYZzoom},
 it is particularly obvious that the higher order method 
 more accurately approximates small scale oscillations
 than the standard method. While both approximate well 
 the slower dynamics, the standard averaging method
 almost entirely smooths out the faster oscillations
 while with increasing polynomial approximation order, 
 these fast oscillations 
 are better captured by the solutions of the HO averaging models.

 \subsection{Convergence and accuracy of the higher order models}
 
 Here, we study the accuracy of solutions obtained from our higher order averaging method 
 compared to exact solutions, obtained from numerical simulations of 
 \eqref{equ_gen_V}, by performing a convergence analysis of the solutions and their 
 dependencies on the averaging window $T$ and the polynomial degree $p$. 
 For the chosen $T$ from above, we already verified that with increasing 
 $p$ the error in the averaged solutions decreases significantly. However,
 these error depend crucially on the choice of averaging window $T$. 
 For instance, in case $T$ is too large, even high polynomial approximations 
 will have an significant error.

 In the following, we will therefore study solution errors with respect to 
 both $p$ and $T$ and provide a 2 dimensional (2D) error map. 
 To determine these 2D error maps, we define the following $L_2$ error 
 measure for solution $\hat X(t)$ (and similarly for $\hat Y(t),\hat Z(t)$)
 \begin{equation}
 {L_2} \hat X(t) = \frac{\sqrt{\int_0^{tf} (\hat X(t) - \hat X_{e}(t))^2 dt}  }{\sqrt{\int_0^{tf} \hat X_{e}(t)^2 dt}}
 \end{equation}
 where $\hat X(t)$ and $\hat X_{e}(t)$ is the x-component of $\V$ of the averaged and exact solutions, 
 respectively (and similarly for $\hat Y, \hat Z$), and where $tf$ is the final time.

 \begin{figure}[h]\centering
 \begin{tabular}{ccc}
    \hspace{-1cm} \includegraphics[scale=.42]{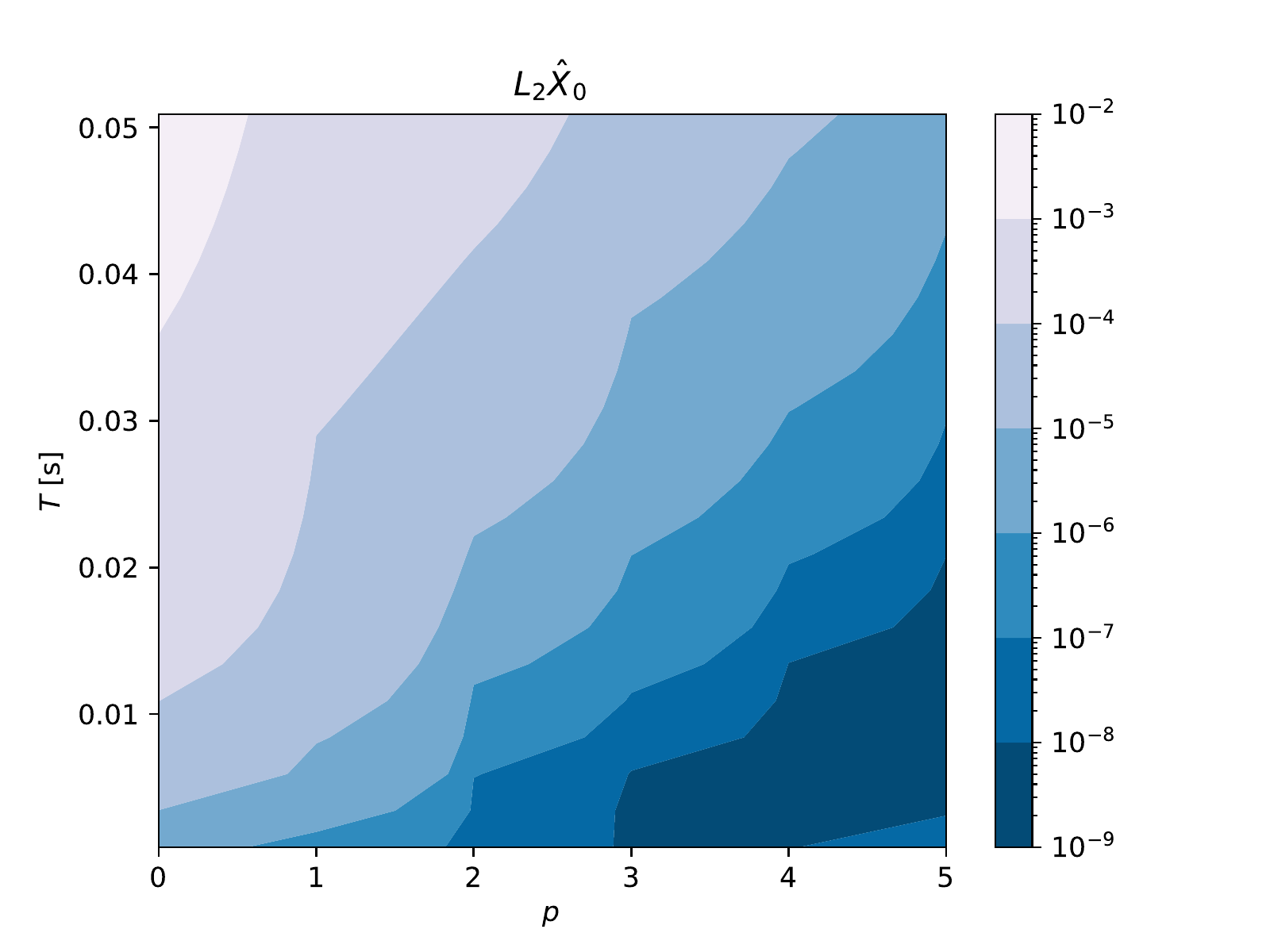} & 
    \hspace{-1.3cm} \includegraphics[scale=.42]{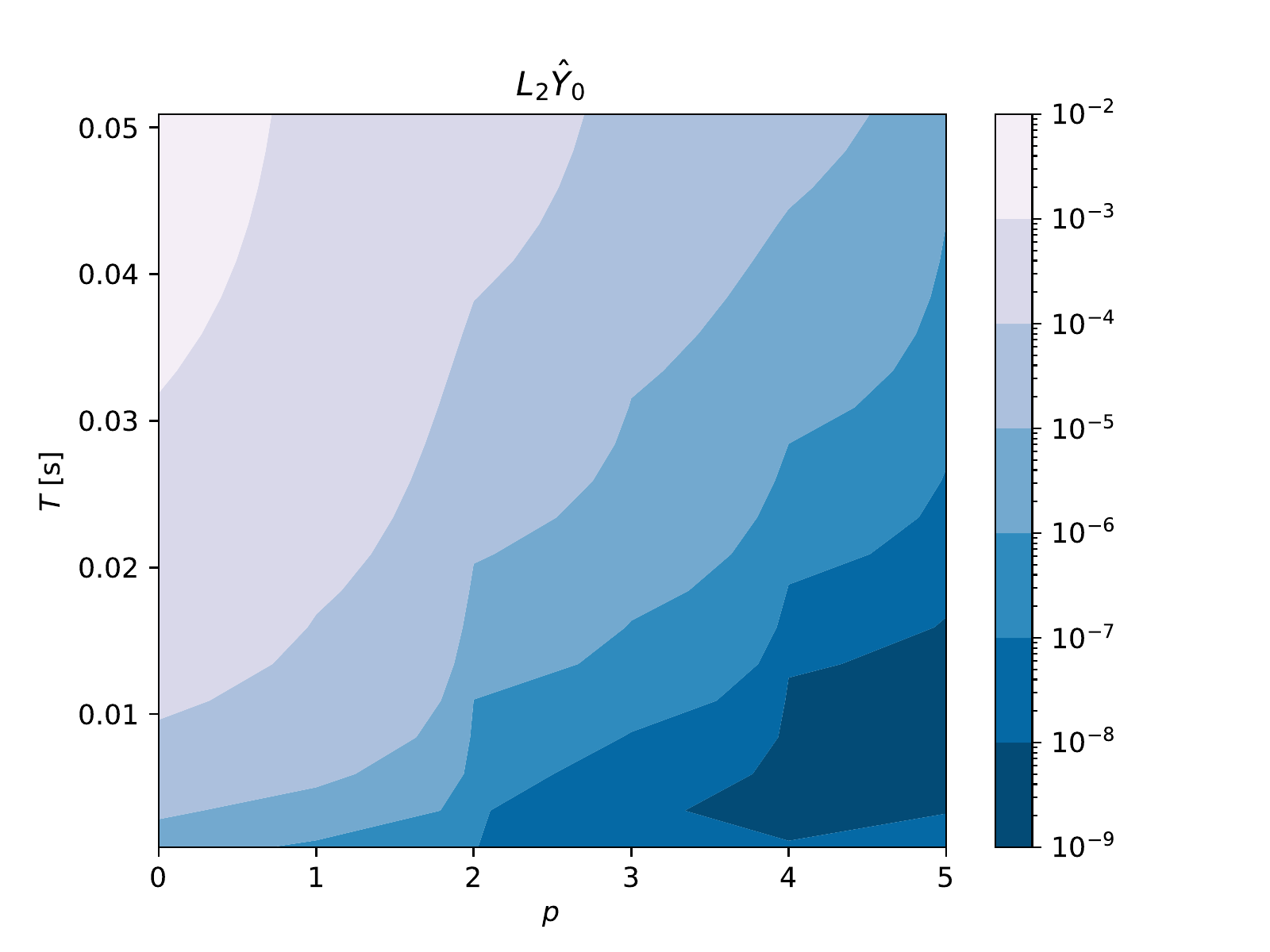} & 
    \hspace{-1.3cm} \includegraphics[scale=.42]{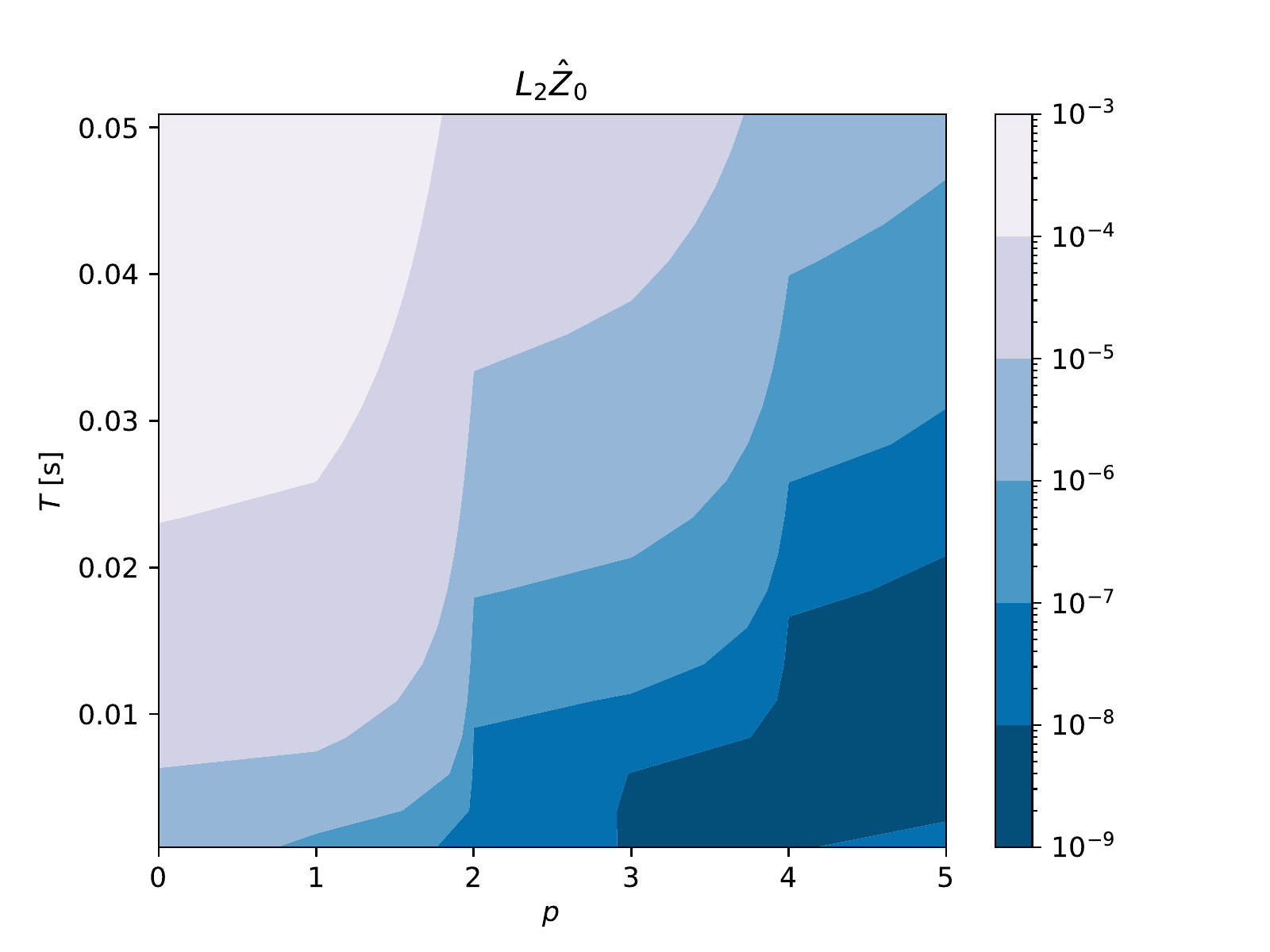}  
  \end{tabular}
 \caption{2D $L_2$-error plots $(p,T)$ for solutions $\V_0$ in the limit $T \rightarrow 0\,$s. Noting the logarithmic colour scale, we note that
     the error decays exponentially with $p$ for fixed $T$, and larger
     $T$ reduces the decay rate. 
  }
  \label{fig_2d_pT0}
  \end{figure}

 We will consider three regimes of averaging window size $T$, namely 
 (i) $T \rightarrow 0\,$s, (ii) $T \in [0.05, 0.5]\,$s, and (iii) $T \rightarrow \infty$,
 whilst studying the impact of higher order polynomial approximation in the solver 
 on the solutions' accuracy. 
 In practice, we achieve with our implementation and, e.g., $p = 5$ values for $T$ 
 of about $0.0005\,$s for case (i) and of $1e+12\,$s for case (iii), or even smaller respectively 
 larger $T$ for smaller $p$.
 We will consider an integration time of $tf = 167\,$s 
 here (corresponding to the first cycle) and we use the default adaptive timestepping error tolerance of $1.49012e^{-8}$ or otherwise stated.

 In Figure~\ref{fig_2d_pT0} we presented results for case (i) for small $T \in [0.001,0.0035,0.0060, ..., 0.05]\, $s. 
 The figure indicates that for $T \approx 0.005\,$s we have minimum error values 
 in $\hat X_0, \hat Y_0, \hat Z_0$ of about $10^{-9}$ for $p = 5$. 
 With these values we reach the default solver tolerance ($\approx 10^{-8}$), 
 while with a solver tolerance of about $10^{-10}$, we reach for the same $p$ and $T$ values
 minimum errors of about $10^{-11}$ that show a similar error pattern to Figure~\ref{fig_2d_pT0}.
 With even smaller tolerances the solutions become even more accurate (not shown).
 For each tolerance studied, we experience for values of $T$ smaller than about $0.0025\,$s 
 an increase in the error values (cf. smallest $T$ values in Figure~\ref{fig_2d_pT0}). 
 This increase is related to the fact that for $T \leq 0.0025\,$s the condition number of 
 the inverse of $M$ gets huge, leading to increased numerical errors in the 
 numerical solver of the ODE. These results confirm the discussion of Section~\ref{sec:asymptotics}.

 Next, we investigate case (ii). As shown in Figure~\ref{fig_2d_pTmiddle}, for averaging 
 window sizes smaller than $T\leq 0.2\,$s, the higher 
 order method significantly improves the accuracy (cf. also Figures~\ref{fig_Vxyz_1000s} 
 and \ref{fig_Vxyz_1000s_XYZzoom}). In particular 
 for $T = 0.05\,$s and $p =10$ we reach solver tolerance for all three 
 components, but also for other values with $T\leq 0.2\,$s, 
 our method improves the accuracy significantly. For values $0.2\,$s$\,\leq T\leq 0.5\,$s, the HO 
 method still provides improvements but with error values for $p=10$
 of $10^{-02}$ and $10^{-03}$ for $\hat X_0,\hat Y_0$ and $\hat Z_0$, respectively.
 For even larger averaging windows of $T\geq 0.5\,$s (case (iii)), 
 solutions share these latter error values even for larger values in 
 $p$ (not shown). 
 This is because in this case, the exponential function dominates the factors 
 in the expansion in \eqref{eqn_sum3} and, therefore, the results correspond to the 
 original averaging method of \citet{peddle2019parareal}.
 Also these results agree with the derivations of Section~\ref{sec:asymptotics}.

 \begin{figure}[h]\centering
 \begin{tabular}{ccc}
    \hspace{-1cm} \includegraphics[scale=.42]{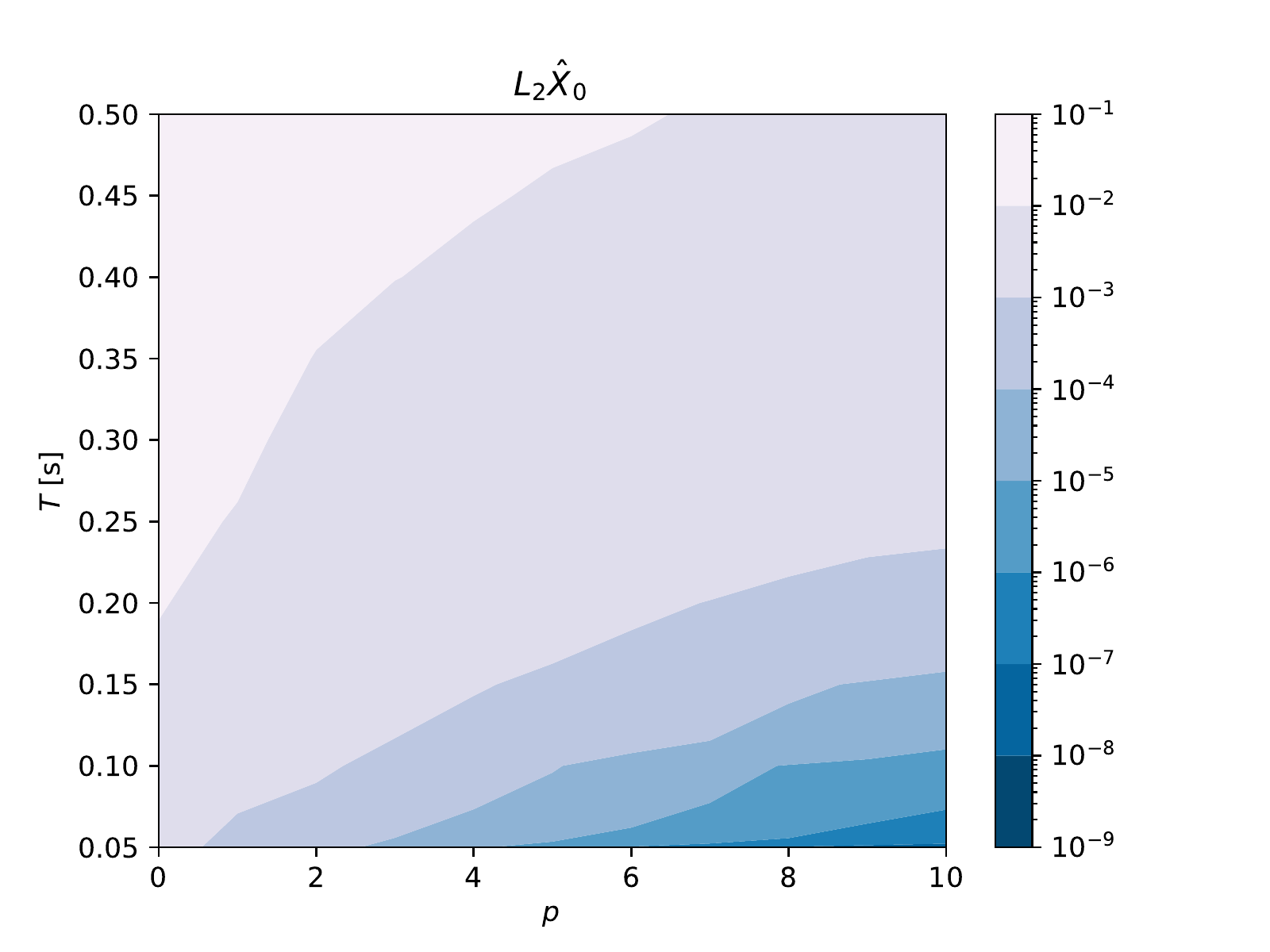} & 
    \hspace{-1.3cm} \includegraphics[scale=.42]{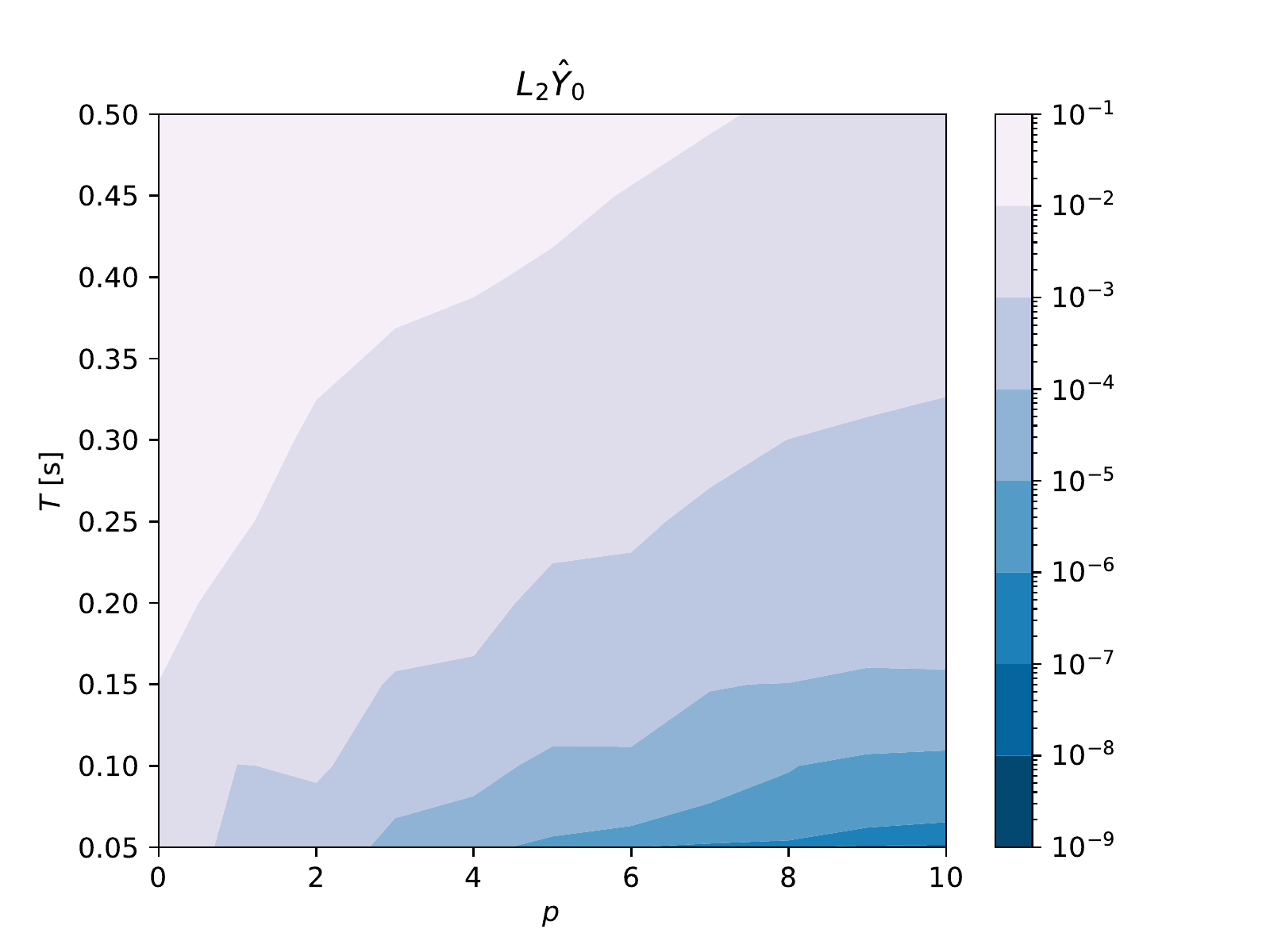} & 
    \hspace{-1.3cm} \includegraphics[scale=.42]{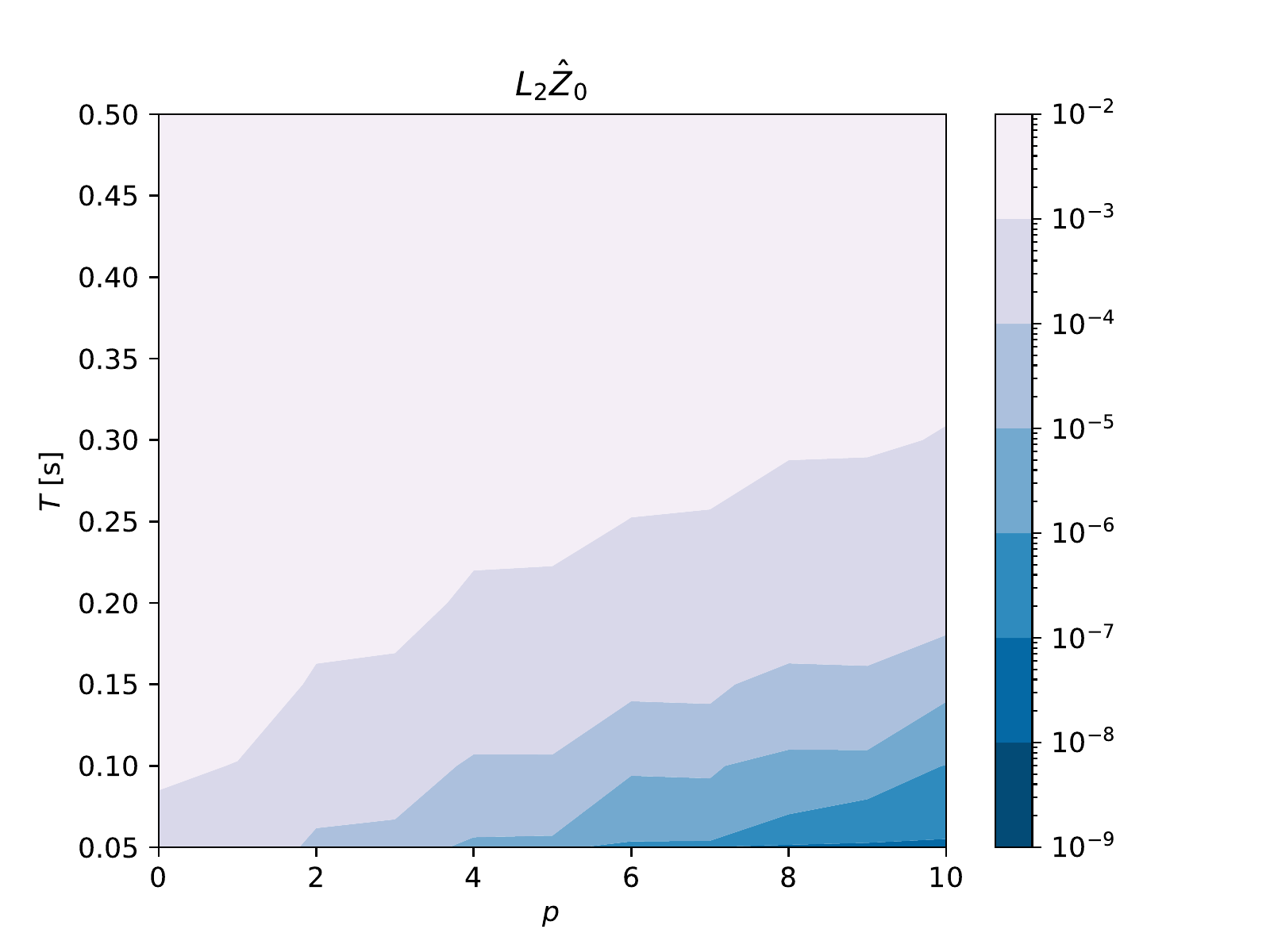}  
  \end{tabular}
  \caption{2D $L_2$-error plots $(p,T)$ for $\V_0$ for intermediate values of $T$.
  }
  \label{fig_2d_pTmiddle}
  \end{figure}

 \begin{figure}[h]\centering
 \begin{tabular}{cc}
    \includegraphics[scale=.45]{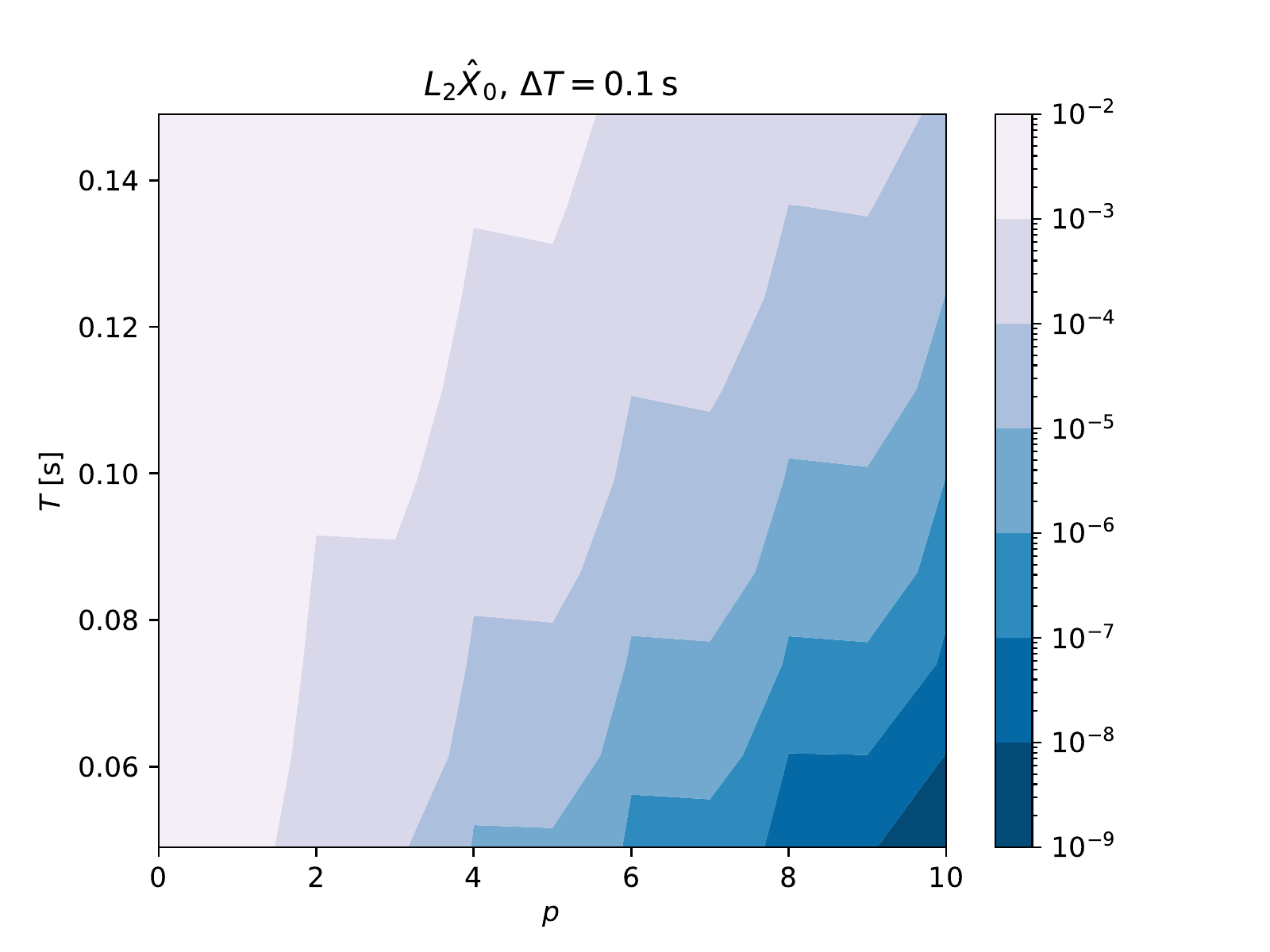}    &
    \includegraphics[scale=.45]{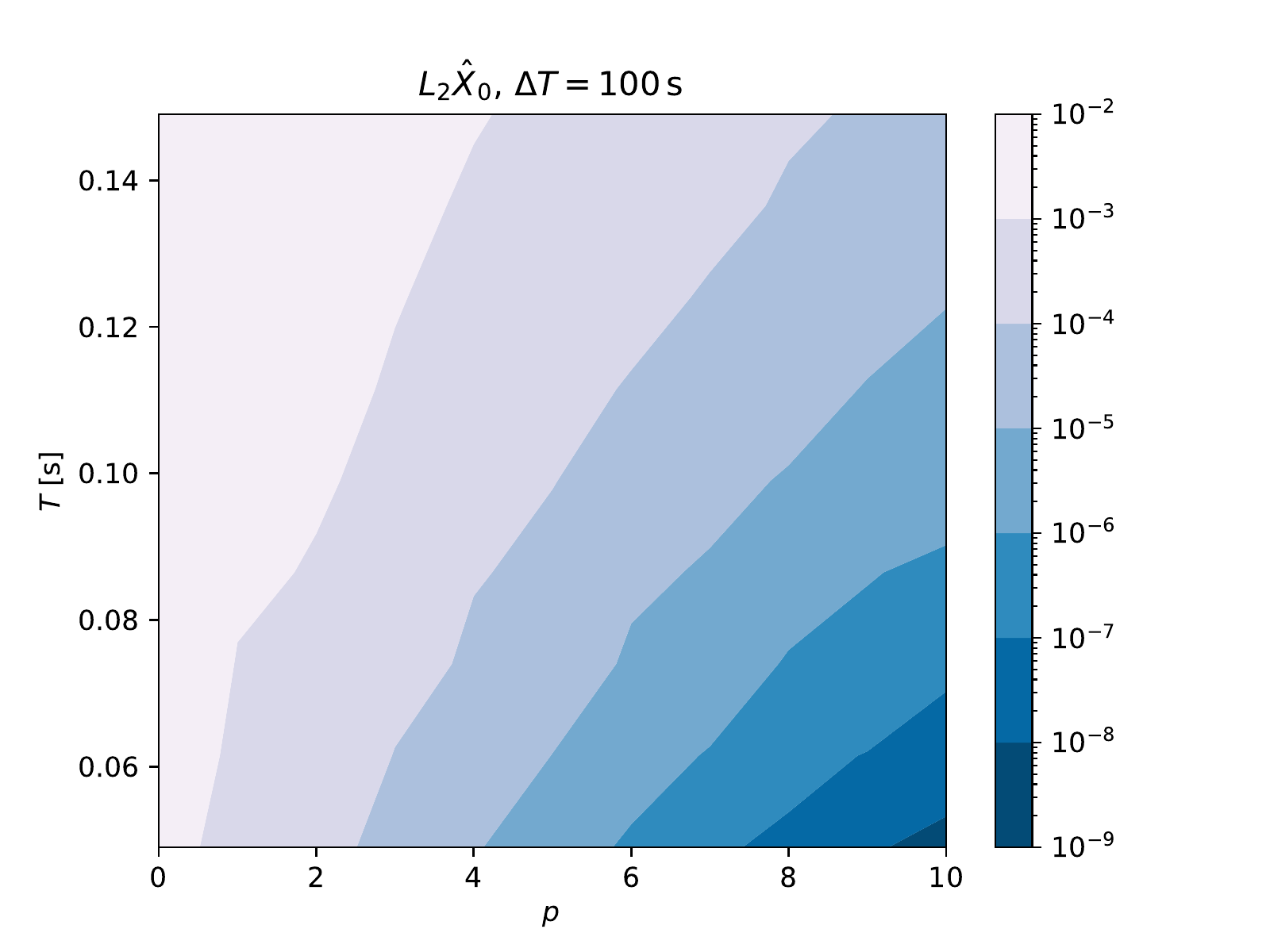}  
  \end{tabular}
  \caption{2D $L_2$-error plots $(p,T)$ for $\hat X_0$ for resetting windows 
  $\Delta T = 0.1\,s$ (left) and $\Delta T = 100\,s$ (right).
  }
  \label{fig_Vx1}
  \end{figure}
 
 As indicated above, the resetting after the time interval $\Delta T$ 
 of the higher order velocity components prevents the HO models from blowups.
 The latter are due to instabilities in the higher order terms 
 which, in turn, are caused by a growing discrepancy between principal and 
 higher order modes in $\V$ that might happen after long simulation times, 
 cf. Figure~\ref{fig_Vxyz_1000s}.  
 In Figure~\ref{fig_Vx1}, we present error plots for $\hat X_0$ solutions obtained
 with the higher order averaging approach for two different resetting values: 
 $\Delta T = 0.1\,$s (left) and $\Delta T = 100\,$s (right).
 The similarity of both error maps indicates that a change in the
 resetting window size $\Delta T$ has almost no impact on the accuracy of the solutions. 
 This property is shared by the corresponding error maps for the $\hat Y_0$ and $\hat Z_0$ 
 components (not shown).

\section{Summary and outlook}
\label{sec:summary}
In this paper we introduced a higher order finite window
averaging technique and investigated it by specialising to
Gaussian weight functions to allow for explicit computation of
averages. This enabled us to efficiently investigate the higher order
technique when applied to highly oscillatory ODEs, in particular
Lynch's swinging spring model. We found that higher order corrections
can strongly increase the accuracy of the averaged model solutions,
but only when the averaging window is close to the time period of the
fast frequency (this model only has two fast frequencies, in 2:1
resonance). We expect the situation to become more complicated when
there are a spectrum of fast frequencies. In this case, it is known
that very high frequencies do not change the slow components of the
solution much, but moderate fast frequencies can resonate in the
nonlinearity (just like in the swinging spring), altering the slow
dynamics. In this case one would want to select an averaging window
that removes as much fast dynamics as possible, whilst preserving accuracy
of the slow dynamics arising from near-resonant interactions, and the
higher order model could prove useful in reducing the impact of the
averaging on this slow dynamics.

In this paper we have also avoided numerical aspects, such as the
effects of approximate averages by numerical quadrature, and the
interaction of numerical time integrators with the
averaging.  \cite{haut2014asymptotic} investigated the impact
  of using numerical quadrature in the basic phase averaging setting
  (corresponding to the lowest order method in the present framework)
  and found that there are no numerical instabilities provided that
  the oscillations in the phase variable are sufficiently resolved by
  the quadrature (the Nyquist condition, essentially). In the present
  framework, choosing a different $\rho$, or using numerical
  quadrature, just results in a different inner product with which to
  compute the projections, so we do not anticipate difficulties.  An
analysis using the ideas of \citet{peddle2019parareal} would be useful
to address the interaction of numerical time integrators with the
averaging. Looking further ahead, if the higher order averaging model
is shown to improve the accuracy of averaging methods whilst still
allowing larger time steps in numerical integrations, this would
motivate the investigation of multilevel schemes such as PFASST, where
the low order averaging is used as a coarse propagator and higher
order averaging models provide higher order corrections, some of which
can be hidden in parallel behind first iterations in later timesteps.

\section*{Acknowledgement}

The authors would like to acknowledge funding from NERC NE/R008795/1 
and from EPSRC EP/R029628/1.

\appendix

\section{Fully explicit representation for $p = 2$}
\label{append_explicit}

 Here, we present a fully explicit version of \eqref{eqn_mass_2} in terms of tendencies
 for $\V_0$, $\V_1$, and $\V_2$. Split into these tendencies while inserting values for $\R_\alpha^m$
 and taking into account that $ \Fu_{m,j,k} = \Fu_{m,k,j} \forall m,j,k$,
 we arrive at 
 \begin{equation}\notag
 \begin{split}
  \dot\V_0(t) = & \sum_{m = 1}^M e^{ic_m t} e^{- \frac{c_m^2 T^2}{2} } 
\Big(  
\frac{3}{2}    \Fu_{m,0,0} 1 
+ \frac{3}{2}  \Fu_{m,1,1} (T^2 - k_m^2 T^4)  
+ \frac{3}{2}  \Fu_{m,2,2} (3T^4 - 6 k_m^2 T^6 +  k_m^4 T^8)   \\
&
+ 3\Fu_{m,0,1} (ic_m T^2)  
+ 3\Fu_{m,0,2} (T^2 - k_m^2 T^4)  
 + 3\Fu_{m,1,2} (i 3 k_m T^4 - i k_m^3 T^6)  \\
 & 
-\frac{1}{2} \Fu_{m,0,0} (1 - k_m^2 T^2) 
 -\frac{1}{2} \Fu_{m,1,1} (3T^2 - 6 k_m^2 T^4 +  k_m^4 T^6)  \\
 &
-\frac{1}{2} \Fu_{m,2,2} (15T^4  - 45 k_m^2 T^6    + 15 k_m^4 T^{8}  -  k_m^6 T^{10})  
-\Fu_{m,0,1} (i 3 k_m T^2 - i k_m^3 T^4)  \\
&
- \Fu_{m,0,2} (3T^2 - 6 k_m^2 T^4 +  k_m^4 T^6)  
-\Fu_{m,1,2} (i 15  k_m T^4 - i 10 k_m^3  T^6 +    i k_m^5 T^{8}) 
\big) ,
\end{split}
 \end{equation}
 which can be summarized to
\begin{equation}\notag
 \begin{split}
  \dot\V_0(t)= &\sum_{m = 1}^M e^{ic_m t} e^{- \frac{c_m^2 T^2}{2} } 
\Big(
   \Fu_{m,0,0} (1+ \frac{1}{2}c_m^2 T^2) 
 +  \Fu_{m,1,1} (\frac{3}{2}c_m^2 T^4 - \frac{1}{2} c_m^4 T^6) \\
 & +  \Fu_{m,2,2} (-3T^4  + \frac{27}{2} c_m^2 T^6 -6c_m^4 T^8 + \frac{1}{2} c_m^6 T^8) \\
 & +  \Fu_{m,0,1} (i c_m^3T^4)
 +    \Fu_{m,0,2} (3 c_m^2 T^4 - c_m^4 T^6)
 +    \Fu_{m,1,2} (- 6 i c_m T^4 +7 i c_m^3 T^6 - ic_m^5 T^8)
 \Big) .
 \end{split}
 \end{equation}
For $\V_1$, there follows directly
\begin{equation}\notag
 \begin{split}
  \dot\V_1(t) = & \sum_{m = 1}^M e^{ic_m t} e^{- \frac{c_m^2 T^2}{2} } 
\Big(\Fu_{m,0,0} (ic_m )  
+ \Fu_{m,1,1} (i 3 c_m T^2 - i c_m^3 T^4)  
+ \Fu_{m,2,2} (i 15  c_m T^4 - i 10 c_m^3  T^6 +    i c_m^5 T^{8})   \\
& + 2 \Fu_{m,0,1} (1 - c_m^2 T^2)   
+ 2 \Fu_{m,0,2} (i 3 c_m T^2 - i c_m^3 T^4)   
+ 2 \Fu_{m,1,2} (3T^2 - 6 c_m^2 T^4 +  c_m^4 T^6)   
  \Big) .
 \end{split}
 \end{equation}
 Finally for $\V_2$, we find
 \begin{equation}\notag
 \begin{split}
  \dot\V_2(t) = & \sum_{m = 1}^M e^{ic_m t} e^{- \frac{c_m^2 T^2}{2} } 
\Big(-\frac{1}{2}\Fu_{m,0,0} 1/T^2 
-\frac{1}{2}\Fu_{m,1,1} (T^0 - k_m^2 T^2) 
-\frac{1}{2}\Fu_{m,2,2} (3T^2 - 6 k_m^2 T^4 +  k_m^4 T^6)\\
& 
- \Fu_{m,0,1} (ik_m T^0)  
- \Fu_{m,0,2} (T^0 - k_m^2 T^2) 
- \Fu_{m,1,2} (i 3 k_m T^2 - i k_m^3 T^4) \\
&
+ \frac{1}{2}\Fu_{m,0,0} (1/T^2 - k_m^2 T^0) 
+\frac{1}{2} \Fu_{m,1,1} (3T^0 - 6 k_m^2 T^2 +  k_m^4 T^4) \\
&
+\frac{1}{2} \Fu_{m,2,2} (15T^2  - 45 k_m^2 T^4    + 15 k_m^4 T^{6}  -  k_m^6 T^{8}) \\
&
+ \Fu_{m,0,1} (i 3 k_m T^0 - i k_m^3 T^2) 
+ \Fu_{m,0,2} (3T^0 - 6 k_m^2 T^2 +  k_m^4 T^4) \\
&
+ \Fu_{m,1,2} (i 15  k_m T^2 - i 10 k_m^3  T^4 +    i k_m^5 T^{6})  
\Big),  
\end{split}
\end{equation}
which leads to
\begin{equation}\notag
 \begin{split}
  \dot\V_2(t) =
&\sum_{m = 1}^M e^{ic_m t} e^{- \frac{c_m^2 T^2}{2} } 
\Big(  \Fu_{m,0,0}  (-\frac{1}{2}c_m^2)
 +  \Fu_{m,1,1} (1 - \frac{5}{2}c_m^2 T^2 +\frac{1}{2}c_m^4 T^4) \\
 &
  +  \Fu_{m,2,2} (6 T^2 - \frac{39}{2} c_m^2 T^4  + 7 c_m^4 T^6 - \frac{1}{2} c_m^6 T^8) \\
 & +  \Fu_{m,0,1}  (2ic_m)
 +    \Fu_{m,0,2}  (2 -5 c_m^2 T^2 + c_m^4 T^4)
 +    \Fu_{m,1,2}  (12 i c_m T^2 - 9 i c_m ^3 T^4  + i c_m^5 T^6)
\Big).
 \end{split}
 \end{equation}
 
% \bibliography{higher}

\end{document}